\title{Rational points on non-linear horocycles and pigeonhole statistics for the fractional parts of $\sqrt{n}$} 
\author{Sam Pattison}
\numberwithin{equation}{section}
\newtheorem{theorem}{Theorem}[section]
\newtheorem{lemma}{Lemma}[section]
\newtheorem{proposition}{Proposition}[section]
\newtheorem{corollary}{Corollary}[section]
\newtheorem{remark}{Remark}[section]
\begin{document}
\maketitle
\begin{abstract}
In this paper we investigate \textit{pigeonhole statistics} for the fractional parts of the sequence $\sqrt{n}$. 
Namely, we partition the unit circle $ \mathbb{T} = \mathbb{R}/\mathbb{Z}$ into $N$ intervals and show that the proportion of intervals containing exactly $j$ points of the sequence $(\sqrt{n} + \mathbb{Z})_{n=1}^N$ converges in the limit as $N \to \infty$.

More generally, we investigate how the limiting distribution of the first $sN$ points of the sequence varies with the parameter $s \geq 0$. A natural way to examine this is via point processes - random measures on $[0,\infty)$ which represent the arrival times of the points of our sequence to a random interval from our partition.
We show that the sequence of point processes we obtain converges in distribution and give an explicit description of the limiting process in terms of random affine unimodular lattices.

Our work uses ergodic theory in the space of affine unimodular lattices, building upon work of Elkies and McMullen.
We prove a generalisation of equidistribution of rational points on expanding horocycles in the modular surface,
working instead on non-linear horocycle sections.
\end{abstract}

\section{Introduction}\label{intro}
Let $\mathbb{T} := \mathbb{R}/\mathbb{Z}$ denote the circle, $\mathbb{N} := \{1,2,3, \dotsc \}$ be the set of natural numbers, $\mathbb{N}_0 := \mathbb{N} \cup \{0\}$ be the set of non-negative integers and $\mathbb{R}^+$ the set of non-negative real numbers. We investigate \textit{pigeonhole statistics} for the sequence $\sqrt{n}$ modulo 1. Specifically, we look at
the limiting distribution of the numbers $(\sqrt{n}  + \mathbb{Z})_{n=1}^{N}$ among partitions of $\mathbb{T}$ into intervals of length $\frac{1}{N}$ as $N \to \infty$.

For $s \geq 0$, $x_0 \in [0,1)$ and $N \in \mathbb{N}$ with $N \geq 1$ define
\begin{equation}\label{S_N}
    S_N(x_0,s):= |\{ 1 \leq n \leq sN: \sqrt{n} \in [x_0 - \tfrac{1}{2N} , x_0 + \tfrac{1}{2N}) + \mathbb{Z} \}|.
\end{equation}
When $x_0$ ranges over the set $ \Omega_N : =\{ \tfrac{k}{N}: 0 \leq k \leq N-1\} \subset [0,1)$ the $N$ intervals $[x_0 - \tfrac{1}{2N} , x_0 + \tfrac{1}{2N}) + \mathbb{Z}$ will partition $\mathbb{T}$ and so the average value of $S_N(x_0,s)$ as $x_0$ ranges over $\Omega_N$ will be $\frac{\lfloor sN \rfloor }{N} = s + O(\tfrac{1}{N})$. As a result it is natural to investigate the long term statistical properties of the sequences $\{S_N(x_0,s) : x_0 \in \Omega_N \}$ as $N \to \infty$ and, in particular, the proportion of terms equal to a given $j \in \mathbb{N}_0$ as $N \to \infty$. 
Indeed, for each $j \in \mathbb{N}_0$ we define 
\begin{equation}\label{E_N} 
E_{j,N}(s) := \frac{1}{N} |\{ 0 \leq k \leq N-1: S_N(\tfrac{k}{N},s) = j \}|.
\end{equation}
This is the proportion of the intervals $\{[x_0 - \tfrac{1}{2N} , x_0 + \tfrac{1}{2N}) + \mathbb{Z} :  x_0 \in \Omega_N \}$ containing exactly $j$ of the points $\{ \sqrt{n}  : 1 \leq n \leq sN \}$. Here we show:
\begin{theorem}\label{secondPigeon}
For all $j \in \mathbb{N}_0$ and $s \geq 0$, $E_j(s) := \lim_{N \to \infty} E_{j,N}(s)$ exists. Moreover, the limiting distribution function $E_j(s)$ is $C^2$ with respect to $s$. 
\end{theorem}

Our proof of Theorem \ref{secondPigeon} builds upon the work of Elkies and McMullen in \cite{elkies2004gaps}. Here ergodic theory and, specifically, Ratner's theorem are used to determine the gap distribution of the sequence $( \sqrt{n} + \mathbb{Z} )_{n=1}^{\infty}$ via relating these properties to the equidistribution of a family of closed orbits of a certain unipotent flow in the homogeneous space 
\begin{equation}\label{firstX}
    X = (\text{SL}(2,\mathbb{Z}) \ltimes \mathbb{Z}^2) \backslash (\text{\text{SL}}(2,\mathbb{R})\ltimes \mathbb{R}^2).
\end{equation}
We elaborate on this further in \textsection \ref{Ergodic Theory}.

\begin{remark}
\textup{
The limiting functions $E_j(s)$ are given more concretely by (\ref{LimitEquation}). They give the probabilitity the lattice corresponding a randomly chosen point $x \in X$ contains exactly $j$ points in a fixed triangle of area $s$ in the plane. The functions $E_j(s)$ agree with the limiting distribution for the probability of finding $j$ of the points of the sequence $\{\sqrt{n}+ \mathbb{Z} : 1 \leq n \leq sN \}$ in a randomly shifted interval of length $\frac{1}{N}$ in $\mathbb{T}$. (\cite{elkies2004gaps}).
They also agree with the limiting functions found by Marklof and  Str\"{o}mbergsson for the probability of finding exactly $j$ lattice points of a typical (2-dimensional) affine unimodular lattice in a ball of radius $N$ whose directions all lie in a random open disc of radius proportional to $\tfrac{s}{N^2}$ on the unit circle. (\cite[Theorem 2.1 and Remark 2.3]{marklof2010distribution}). As we will see in \textsection\ref{Pigeonhole Stats}, the work of Marklof and Str\"{o}mbergsson allows us to immediately infer the aforementioned differentiability of the limiting distribution functions. }
\end{remark}

\begin{remark}
\textup{
We do not give exact formulas for the functions $E_j(s)$ in terms of explicit analytic functions in this paper. The analogous functions for rectangles were considered by Str\"ombergsson and Venkatesh in \cite{strombergsson2005small}
who obtained explicit piecewise analytic formulas for small $j$. Based on their work, we would expect the functions $E_j(s)$ to be piecewise analytic with the functions becoming increasingly complex as $j$ increases.}
\end{remark}

\begin{remark}
\textup{
As is discussed in, for example, \cite{technau2020correlations}, the sequence of fractional parts of the sequence $\sqrt{n}$ is of interest from the point of view of \textit{fine scale statistics}. The gap distribution of this sequence in not Poissonian (see also Remark \ref{PoiRem}) which contrasts with the conjectured gap distribution of the fractional parts of $n^{\alpha}$ for any other $\alpha \in (0,1) \setminus \{  \tfrac{1}{2} \}$.
In our case, if we instead considered the fractional parts of $n^{\alpha}$ for $\alpha \in (0,1) \setminus \{  \tfrac{1}{2} \}$ we would expect Poissonian pigeonhole statistics in the sense that the corresponding limiting distribution functions $E_j(s)$ would equal $\tfrac{s^je^{-j}}{j!}$. This contrasts with the case $\alpha = \tfrac{1}{2}$ as shown in Figure \ref{Pigeonholegraphs}.
} 
\end{remark}

\begin{figure}[h]\label{Pigeonholegraphs}
\includegraphics[scale=0.53]{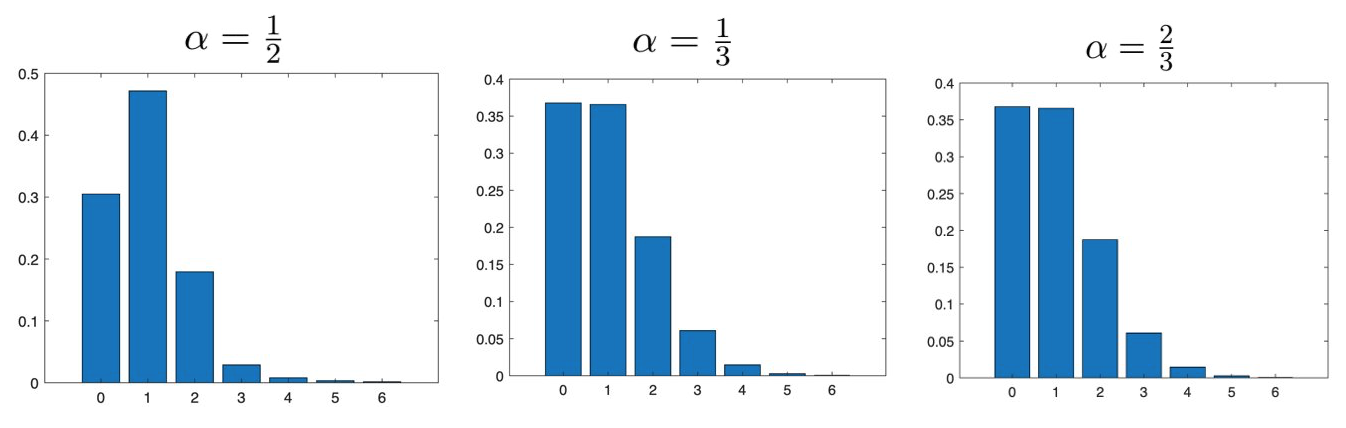}
Figure \ref{Pigeonholegraphs}: The proportion of $N = 10000000$ intervals in partition of $\mathbb{T}$ containing $0  \leq j \leq 6$ points of $n^{\alpha} + \mathbb{Z}$ for $n \leq N$ when $\alpha$ is equal to $\tfrac{1}{2}$, $\tfrac{1}{3}$ and $\tfrac{2}{3}$. For $\alpha =1/2$, these proportions approximate $E_j(s)$ for $s = 1$.
\end{figure}

We can also recast our problem in a probabilistic setting. Indeed, for $N \in \mathbb{N}$, let $W_N$ be a random variable which is distributed uniformly on the set $\Omega_N$. Then, we define a sequence of stochastic processes $Y^N_s$ for $N \in \mathbb{N}$ and $s \geq 0$ by setting 
\begin{equation}\label{Y_s^N}
    Y^N_s := S_N(W_N,s).
\end{equation}
With this notation Theorem \ref{secondPigeon} states that the sequence $\mathbb{P}(Y^N_s = j)$ converges as $N \to \infty$. 

For each $N$, we can also think about each point $x_0 \in \Omega_N$ as giving us a locally finite Borel measure on $\mathbb{R}^+$ of the form
$$ \eta_N(x_0) := \sum_{r=1}^{\infty} \delta_{s_r(x_0)} $$
where $s_1(x_0) < s_2(x_0) < s_3(x_0), \dotsc $ are the complete sequence of points $s \in \tfrac{1}{N}\mathbb{N}$ such $\sqrt{sN} \in [x_0- \tfrac{1}{2N}, x_0 + \tfrac{1}{2N} ) + \mathbb{Z} $. Namely, these are points of discontinuity of the map $s \longmapsto S_N(x_0,s)$. In this case we have the relation
\begin{equation}\label{processIntervalRelation}
\eta_N(x_0)([0,s]) = S_N(x_0,s).
\end{equation}
Again, recasting this in a probabilistic setting, we define the corresponding sequence of random measures/point processes $\xi_N$ by setting 
\begin{equation}\label{definition of xi}
    \xi_N := \eta_N(W_N)
\end{equation}
Equation (\ref{processIntervalRelation}) above tells us that, for an interval $(a,b] \subset \mathbb{R}^+$, we have that the point process and stochastic process are related via
\begin{equation}\label{processIntervalRelation2}
\xi_N((a,b]) = Y^N_b - Y^N_a.
\end{equation}
In this setting, we establish the following convergence result which helps us to understand how the limiting distribution of the points of our sequence varies with $s$.
\begin{theorem}\label{mainPigeon}
The point process $(\xi_N)_{N=1}^{\infty}$ converges in distribution to a point process $\xi$.
\end{theorem}
The process $\xi$ is defined similarly to the processes $\xi_N$ as the sum of Dirac delta measures associated to the jump points of a stochastic process $Y_s:X \to \mathbb{R}$.  Here the space $X$ can be the thought of as the homogeneous space of all two-dimensional affine unimodular lattices (which we show explicitly in \textsection\ref{X}) and $Y_s(x)$ gives the number of points of the lattice associated to $x\in X$ within a certain triangle of areas $s$ in the plane. More concretely, if the lattice associated to $x \in X$ is $L \subset \mathbb{R}^2$ and 
\begin{equation}
    \tau(\infty) := \{ (u,v) \in \mathbb{R}^2: u \geq 0, -u\leq v \leq u \}
\end{equation}
 then
\begin{equation}\label{detlaSum}
\xi(x) = \sum_{(u,v) \in L \cup \tau(\infty) } \delta_{\sqrt{u}} .
\end{equation}
As we illustrate in \textsection \ref{Proporties}, $\xi$ is a simple, intensity 1 process which does not have independent increments.

\begin{remark}\label{PoiRem}
\textup{
The pigeonhole statistics we consider were previously studied by Weiss and Peres for the fractional parts of the sequence $2^n \alpha$ (as well as higher dimensional generalisations). In this case the analogous processes converge to a Poisson point process \cite{Weiss}. A Poisson point process is also (almost surely) the limiting process we would obtain if, instead of generating our point processes via considering how the points of the sequence $\sqrt{n}$ distribute among shrinking partitions of $\mathbb{T}$, we instead consider the analogous processes defined for a sequence of points in $\mathbb{T}$ generated by a sequence of i.i.d random variables which are uniformly distributed on $\mathbb{T}$ (\cite[ §VI.6]{feller1957introduction}). }
\end{remark}

Similarly to what is observed in \cite{el2015two}, even though our limiting processes isn't Poissonian, its second moment is nearly Poissonian with an error resulting from 
the fact that, asymptotically, $\sqrt{N}$ of the points $\{ \sqrt{n} + \mathbb{Z} : 1 \leq n \leq N \}$ are $0$.

\begin{corollary}\label{SecondMoment}
$$\mathbb{E}[|Y^N_s|^2] \to \sum_{j=0}^{\infty} j^2E_j(s)^2 + s =\int_X |Y_s|^2 \; dm_X + s =  s^2 + 2s  $$ as $N \to \infty$. In particular 
$$
\textup{Var}[Y^N_s] \to 2s
$$
as $N \to \infty$.
\end{corollary}

\begin{remark}
\textup{If we desire the (more satisfactory) convergence of the variance of the random variables $Y^N_s$ to those of $Y_s$, one has to avoid the escape of mass resulting from the term $0$ appearing regularly in the sequence of fractional parts of $\sqrt{n}$. This can be done via removing the terms $\sqrt{n}$ when $n$ is a square and, in this case, we would have Var$[Y^N_s] \to s$ which is the variance we would obtain if the limiting point process were Poissonian. We will also use this approach in the proof of Corollary \ref{SecondMoment}.  }
\end{remark}

\subsection{Ergodic Theory}\label{Ergodic Theory}

Let $G = \text{ASL}(2,\mathbb{R}) = \text{SL}(2,\mathbb{R})\ltimes \mathbb{R}^2$ be the affine special linear group of $\mathbb{R}^2$ with multiplication law defined by
$$(M,x)(M', x') = (MM',xM' + x'),
$$
where elements of $\mathbb{R}^2$ are viewed as row vectors. Let $ \Gamma = \text{SL}(2,\mathbb{Z}) \ltimes \mathbb{Z}^2$ be the discrete subgroup of $G$ consisting of elements with integer entries. As is discussed in \textsection\ref{X}, $\Gamma$ is a lattice in $G$, meaning we have a fundamental domain $\widetilde{\mathcal{F}}$ with finite volume (and hence, up to normalization, volume 1) under the Haar measure $m_G$ on $G$. By restricting $m_G$ to $\widetilde{\mathcal{F}}$ and projecting to $X$, we have a right invariant probability measure $m_X$ on the space $X: = \Gamma \backslash G$ which we call the Haar measure on $X$ (\cite[Proposition 9.20]{ErgWithNt}). Let 
$$
\Phi(t) := 
\Bigg( 
\begin{pmatrix}
e^{-\frac{t}{2}} & 0 \\
0 & e^{\frac{t}{2}}
\end{pmatrix}
, (0,0) \Bigg) 
$$ and
$$a(N) := \Phi(\log(N)).$$

As in (\cite{elkies2004gaps}), we shall be concerned with the equidistribution of points on certain \textit{horocycle sections} in the space $X$.  Here, a horocycle section is a function $\sigma:\mathbb{R} \to G$ of the form $$
\sigma(t) := 
\bigg(
\begin{pmatrix}
1 & 2t \\
0 & 1
\end{pmatrix}
, (x(t),y(t)) \bigg),
$$
where $x(t)$ and $y(t)$ are smooth functions. We call $\sigma(t)$ a horocycle section of period $p \in \mathbb{N}$ if there exists some $\gamma_0 \in \Gamma$ such that $\gamma_0 \sigma(t + p) = \gamma_0 \sigma(t)$ for all $t \in \mathbb{R}$. Moreover, such a horocycle section is \textit{non-linear} if there exists some $\alpha, \beta \in \mathbb{Q}$ such that the set $\{ t \in [0,p] : y(t) = \alpha t + \beta \}$ has zero Lebesgue measure. For such horocycle sections, the following equidistribution result is known.

\begin{theorem}[\cite{elkies2004gaps} Theorem 2.2, \cite{marklof2007distribution} Theorem 4.2] \label{known}
Let $\sigma$ be a non-linear horocycle section with period $p$. Then, for any bounded continuous function $f:X \to \mathbb{R}$
$$
\frac{1}{p}\int_0^p f(\Gamma \sigma(x_0) \Phi(t)) \; dx_0 \to \int_X f \; dm_X
$$
as $t \to \infty$. 
\end{theorem}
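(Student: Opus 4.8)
The plan is to recast the statement as a weak-$*$ convergence of measures and then to feed in Ratner's classification of measures invariant under a unipotent flow, following the strategy of Elkies--McMullen. For $t\ge 0$ let $\mu_t$ be the Borel probability measure on $X$ obtained by pushing forward the normalised Lebesgue measure $\tfrac1p\,dx_0$ on $[0,p]$ under $x_0\mapsto\Gamma\sigma(x_0)\Phi(t)$; the theorem is precisely the assertion that $\mu_t\to m_X$ in the weak-$*$ topology as $t\to\infty$. A standard sandwiching argument --- using that the discontinuity set of a bounded piecewise continuous $f$ is $m_X$-null, together with the tightness recorded below --- reduces matters to testing against $f\in C_c(X)$; and since $X$ is second countable it then suffices to show that every weak-$*$ subsequential limit of $(\mu_t)$ equals $m_X$ (by tightness such a limit is again a probability measure).

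The first substantive step is to show that any weak-$*$ limit $\mu=\lim_k\mu_{t_k}$ is invariant under the one-parameter unipotent subgroup $U=\{\,u(r):=(\left(\begin{smallmatrix}1&2r\\0&1\end{smallmatrix}\right),(0,0)):r\in\mathbb R\,\}$. This is the usual ``shearing'' computation: from $\Phi(t)\,u(r)\,\Phi(t)^{-1}=u(re^{-t})$, from $\left(\begin{smallmatrix}1&2x_0\\0&1\end{smallmatrix}\right)\left(\begin{smallmatrix}1&2re^{-t}\\0&1\end{smallmatrix}\right)=\left(\begin{smallmatrix}1&2(x_0+re^{-t})\\0&1\end{smallmatrix}\right)$, and from a first-order Taylor estimate on $x(\cdot),y(\cdot)$ one obtains
\[
\Gamma\,\sigma(x_0)\,\Phi(t)\,u(r)=\Gamma\,\sigma(x_0+re^{-t})\,\Phi(t)\cdot\varepsilon_t(x_0),
\]
where $\varepsilon_t(x_0)\to e$ in $G$ uniformly in $x_0\in[0,p]$ as $t\to\infty$. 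Combined with the periodicity $\Gamma\sigma(x_0+p)\Phi(t)=\Gamma\sigma(x_0)\Phi(t)$ and the uniform continuity of a test function $F\in C_c(X)$, this gives $\int_X F(yu(r))\,d\mu_t(y)-\int_X F\,d\mu_t\to 0$ for each fixed $r$, whence $\mu$ is $U$-invariant.

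By Ratner's measure classification theorem applied to the $U$-action on $X=\Gamma\backslash G$ with $G=\mathrm{SL}(2,\mathbb R)\ltimes\mathbb R^2$, $\mu$ is a mixture of homogeneous measures, each supported on a closed orbit $xH$ of a unimodular closed connected subgroup $H$ with $U\subseteq H\subseteq G$. An inspection of the subgroup lattice of $\mathrm{SL}(2,\mathbb R)\ltimes\mathbb R^2$ shows that, apart from $G$ itself, the relevant $H$ are (up to conjugacy) only $U$, the $2$-dimensional abelian group $U\oplus\mathbb R\mathbf e_2$, the $3$-dimensional nilpotent group $U\ltimes\mathbb R^2$, and $\mathrm{SL}(2,\mathbb R)$ (the subgroups projecting onto a Borel of $\mathrm{SL}(2,\mathbb R)$ are non-unimodular and carry no finite invariant measure). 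To conclude $\mu=m_X$ one must rule out every such proper $H$ from supporting a positive fraction of $\mu$; each proper $xH$ is a proper analytic subvariety of $X$ (for $H$ of Borel-type it involves the cusp, and for the $\mathrm{SL}(2,\mathbb R)$-conjugates it is a ``rational multisection'' of the compact torus bundle $X\to\mathrm{SL}(2,\mathbb Z)\backslash\mathrm{SL}(2,\mathbb R)$, since the $\mathrm{SL}(2,\mathbb R)$-component of $\sigma(x_0)$ is identically $\left(\begin{smallmatrix}1&2x_0\\0&1\end{smallmatrix}\right)$). The key input --- and the one place where the non-linearity of $\sigma$ is indispensable --- is a non-concentration estimate: for each such orbit, $\tfrac1p\,\lambda\{x_0\in[0,p]:\Gamma\sigma(x_0)\Phi(t)\in(xH)^{(\varepsilon)}\}\to 0$, first as $t\to\infty$ and then as $\varepsilon\to 0$, uniformly over the relevant family of orbits. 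If $\sigma$ were linear the curve would actually lie on such a subvariety and this would fail; non-linearity forces $y(x_0)e^{t/2}$ to wind around the fibre torus, which one quantifies through Weyl / van der Corput estimates for the associated theta sums (exactly as in Elkies--McMullen). Feeding this estimate into Ratner's classification via the standard Dani--Margulis linearization argument then yields $\mu=m_X$.

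Finally, tightness of $(\mu_t)$ is comparatively easy: the only non-compactness of $X$ comes from the cusp of the modular surface (the torus fibres being compact), so non-escape of mass for $(\mu_t)$ follows from the corresponding classical statement for the expanding closed horocycle on $\mathrm{SL}(2,\mathbb Z)\backslash\mathrm{SL}(2,\mathbb R)$, via Dani's non-divergence estimates or an $L^1$-bound through Eisenstein series. The step I expect to be the genuine obstacle is the non-concentration estimate of the previous paragraph: the shearing step and the tightness step are routine, but excluding the intermediate homogeneous measures --- in particular the closed $\mathrm{SL}(2,\mathbb R)$-orbits --- is where the soft homogeneous-dynamics input must be supplemented by quantitative equidistribution of the affine part $y(x_0)e^{t/2}$ modulo the fibre lattice, and where the hypothesis that $\sigma$ is non-linear does the real work.
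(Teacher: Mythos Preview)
The paper does not prove this theorem at all: it is stated with citations to \cite{elkies2004gaps} and \cite{marklof2007distribution} and then used as a black box (notably inside the proof of Proposition~\ref{special flow}). So there is no ``paper's own proof'' to compare against; what you have written is a sketch of the proof in the cited references.

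That said, your outline is essentially the Elkies--McMullen strategy and is correct in its architecture: pass to weak-$*$ limits of the pushforward measures $\mu_t$, use the shearing relation $\Phi(t)u(r)\Phi(t)^{-1}=u(re^{-t})$ together with smoothness of $x(\cdot),y(\cdot)$ and periodicity to get $U$-invariance of any limit, invoke Ratner's measure classification, and then eliminate the proper intermediate homogeneous measures via a linearization/non-concentration argument in which the non-linearity hypothesis on $\sigma$ is the crucial input. You also correctly identify that tightness reduces to non-divergence of expanding closed horocycles on the modular surface, and that the genuine work lies in excluding the proper closed $H$-orbits (in particular the $\mathrm{SL}(2,\mathbb R)$-orbits sitting over rational torus sections). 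One small quibble: the elimination step in Elkies--McMullen is carried out via the Dani--Margulis linearization technique rather than through Weyl/theta-sum estimates as you suggest; the non-linearity of $y(\cdot)$ is used to show that the horocycle section does not sit inside (nor spend a positive proportion of time near) the relevant algebraic subvarieties, not via harmonic analysis on the fibre.
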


Applying this to the non-linear period 1 horocycle section
$$
n(t) := 
\bigg(
\begin{pmatrix}
1 & 2t \\
0 & 1
\end{pmatrix}
, (t,t^2) \bigg),
$$
one can determine the distribution of $\{ \sqrt{n}\}_{n=1}^N$ among the intervals $[x_0 - 1/2N, x_0 + 1/2N) + \mathbb{Z}$ when $x_0$ is uniformly distributed on $[0,1)$. In our setting, we restrict $x_0$ to lying in the set $\Omega_N$ for each $N$ and the corresponding equidistribution we desire is that of rational points on such a horocycle section. We therefore prove the following result which, like Theorem \ref{known}, applies more generally to functions $f:X \to \mathbb{R}$ which are piecewise continuous: functions $f:X \to\mathbb{R}$ whose  points of discontinuity are contained in a set of measure zero with respect to $m_X$.

\begin{theorem}\label{main}
Let $\sigma$ be a non-linear horocycle section with period $p$. Then,
for any bounded piecewise continuous function $f:X \to \mathbb{R}$ and $C \geq 1$,
\begin{equation}\label{disceteIntf}
\frac{1}{pN} \sum_{k=0}^{pN-1} f(\Gamma \sigma( \tfrac{k}{N})a(M) ) \to \int_X f \; dm_X
\end{equation}
as $N \to \infty$ and $\tfrac{1}{C}N \leq M \leq CN$. 
\end{theorem}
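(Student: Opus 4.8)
\section*{Proof proposal for Theorem \ref{main}}

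The plan is to follow the dynamical strategy of Elkies and McMullen, but with their continuous average replaced by the rational-point sum, feeding in arithmetic information about the sample points $\tfrac kN$. Set $\lambda_N := M/N \in [\tfrac1C, C]$ and define probability measures $\nu_N$ on $X$ by $\nu_N(f) = \tfrac{1}{pN}\sum_{k=0}^{pN-1} f(\Gamma\sigma(\tfrac kN)a(M))$; since it suffices to treat an arbitrary sequence $M=M_N$ with $\tfrac1CN \le M_N \le CN$, the claim is that $\nu_N \to m_X$ tested against every bounded piecewise continuous $f$ (upgrading from $C_c$ by sandwiching, using tightness below and the fact that $m_X$ gives $\sigma$-null discontinuity sets to each such $f$). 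The basic tool is a thickening estimate linking $\nu_N$ to Theorem \ref{known}: writing $x_0 = \tfrac{k+u}{N}$, $u\in[0,1)$, and using the semidirect-product structure one checks $\sigma(\tfrac{k+u}{N})a(M) = \sigma(\tfrac kN)a(M)\,c_{k,N}(u)$ with $c_{k,N}(u) = \bigl(\left(\begin{smallmatrix}1 & 2\lambda_N u \\ 0 & 1\end{smallmatrix}\right),(0,0)\bigr) + o(1)$ uniformly in $k,u$, whence for uniformly continuous $f$
\[
\tfrac1p\int_0^p f(\Gamma\sigma(x_0)a(M))\,dx_0 \;=\; \nu_N(A_{\lambda_N}f) + o(1), \qquad (A_\lambda f)(y) := \tfrac1{2\lambda}\int_0^{2\lambda} f\!\left(y\,(\left(\begin{smallmatrix}1 & v \\ 0 & 1\end{smallmatrix}\right),(0,0))\right)dv .
\]
By Theorem \ref{known} the left-hand side tends to $m_X(f)$, so $\nu_N(A_{\lambda_N}f)\to m_X(f)$: the limit of $\nu_N$, \emph{smoothed} along a segment of the linear horocycle subgroup $U_0 = \{(\left(\begin{smallmatrix}1 & v \\ 0 & 1\end{smallmatrix}\right),(0,0)):v\in\mathbb{R}\}$, is $m_X$.

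Since the fibres of $X\to X_0 := \mathrm{SL}(2,\mathbb{Z})\backslash\mathrm{SL}(2,\mathbb{R})$ are compact, tightness of $\{\nu_N\}$ reduces to tightness of the shadow measures in $X_0$; this follows from Elkies--McMullen's non-divergence estimate for the non-linear section $\sigma$ (a cusp neighbourhood in $X_0$ is dominated by $A_\lambda\psi$ for a continuous $\psi\ge0$ supported on the still-proper forward-$U_0$-saturation of a slightly smaller cusp neighbourhood, and the thickening estimate bounds $\nu_N$ of the latter by the continuous average of $\psi$), or directly from the classical equidistribution invoked below. Granting tightness, every weak-$*$ limit $\nu$ is a probability measure, and --- exactly as in \cite{elkies2004gaps} --- once $\nu$ is known to be $U_0$-invariant, Ratner's measure classification together with the non-linearity of $\sigma$ (which excludes the intermediate invariant subspaces) forces $\nu = m_X$. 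So the whole theorem reduces to showing that weak-$*$ limits of $\nu_N$ are $U_0$-invariant, equivalently that $\nu_N(f(\,\cdot\, g)) - \nu_N(f)\to0$ for $g\in U_0$. I expect this vanishing of the $U_0$-discrepancy to be the main obstacle: it cannot be obtained from Theorem \ref{known} by soft arguments, since averaging along $U_0$ destroys precisely the information one would need to recover $\nu$ --- the arithmetic of the points $\tfrac kN$ must enter here.

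Concretely I would project to $X_0$: the image of $\Gamma\sigma(\tfrac kN)a(N)$ is $\mathrm{SL}(2,\mathbb{Z})\,\tfrac1{\sqrt N}\!\left(\begin{smallmatrix}1 & 2k \\ 0 & N\end{smallmatrix}\right)$, a rational point of denominator $N$ on the closed horocycle at height $1/N$; these form an orbit of a congruence subgroup, so their equidistribution in $X_0$ as $N\to\infty$ is classical (and itself a consequence of unipotent rigidity). It then remains to upgrade to the torus fibre of $X\to X_0$, which over this point is $\mathbb{R}^2/\Lambda_{k,N}$ with $\Lambda_{k,N}=\mathbb{Z}^2\tfrac1{\sqrt N}\!\left(\begin{smallmatrix}1 & 2k \\ 0 & N\end{smallmatrix}\right)$ and our point is $(x(\tfrac kN)N^{-1/2},\,y(\tfrac kN)N^{1/2})\bmod\Lambda_{k,N}$; after a Fourier expansion in the fibre, joint equidistribution of (shadow, fibre point) reduces to cancellation in Weyl-type sums $\sum_k e(m\,y(\tfrac kN)N^{1/2})$ --- for $\sigma=n$ the classical sums $\sum_{k<pN}e(mk^2N^{-3/2})$ --- which the non-linearity of $\sigma$ makes genuinely oscillatory and which are handled by standard estimates. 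General smooth $y$ is reached by approximation, and the coupling $\tfrac1CN\le M\le CN$ by absorbing $a(M/N)$ as a bounded right translation.

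As an alternative to this shadow/fibre split, one could instead establish an effective (polynomial-rate) form of Theorem \ref{known} for smooth $f$ and then recover the rational-point average from the continuous one by Poisson summation in $x_0$; the twisted integrals $\int_0^p f(\Gamma\sigma(x_0)a(N))e(-mNx_0)\,dx_0$ that appear are again controlled by the quantitative equidistribution together with the $\mathbb{R}^2$-torus characters on $X$. Either way, the dynamical skeleton (tightness $+$ Ratner $+$ non-linearity) is routine once the arithmetic input --- the vanishing of the $U_0$-discrepancy --- is in hand, and that input is where the real work lies.
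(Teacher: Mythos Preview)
Your proposal takes a genuinely different, and considerably harder, route than the paper's. You aim to establish full $U_0$-invariance of any weak-$*$ limit $\nu$ and then invoke Ratner; you correctly flag this invariance as non-trivial and propose arithmetic input (Weyl sums in the fibre, or an effective form of Theorem~\ref{known}) to obtain it. The paper, by contrast, never proves $U_0$-invariance. It uses only the soft observation that $\nu$ is invariant under the \emph{single} element $u(1)$, since $\sigma(\tfrac{k+1}{N})a(N)$ and $\sigma(\tfrac{k}{N})a(N)u(1)$ agree in the base and differ by $O(1/N)$ in the torus fibre. The passage from $u(1)$-invariance to $\nu=m_X$ is then a joining argument: build the special flow $T_t$ under the constant roof $1$ over $x\mapsto xu(1)$, so that $\nu\times ds$ is $T_t$-invariant on $X\times[0,1)$; conjugate by $\psi(x,s)=(xu(s),s)$ to a $\tilde U_t$-invariant measure $\mu$; compute that the $X$-marginal of $\mu$ equals $m_X$ \emph{directly from Theorem~\ref{known}} (this computation is essentially your thickening identity, integrated over $s\in[0,1)$); and conclude $\mu=m_X\times ds$ because $(X,U_t,m_X)$ is mixing and hence disjoint from the circle rotation --- any joining of a mixing map with the identity is the product. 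Pulling back through $\psi$ gives $\nu=m_X$.

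So your claim that the $U_0$-discrepancy ``cannot be obtained from Theorem~\ref{known} by soft arguments, since averaging along $U_0$ destroys precisely the information one would need to recover $\nu$'' is exactly what the paper refutes: Theorem~\ref{known} plus disjointness suffices, with no arithmetic input whatsoever beyond what is already inside Theorem~\ref{known}. Your Weyl-sum route is plausible for $\sigma=n$ (though the fibre Fourier expansion is less clean than you indicate, since the lattice $\Lambda_{k,N}$ and hence the character group varies with $k$), and the effective-equidistribution route is viable in principle, but both are substantial overkill here. What your approach would buy, if carried through, is effectivity --- a rate in Theorem~\ref{main} --- which the paper's disjointness argument cannot give.
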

As we show concretely in \textsection\ref{Pigeonhole Stats}, for an appropriate $f:X \to \mathbb{R}$ we can approximate $\mathbb{P}(\xi_N((a,b]) = 0)$ (or more generally $ \mathbb{P}(\xi_N(B) = 0)$ for $B$ as in Lemma \ref{processConvergence} (ii)) by a sum of the above form in (\ref{disceteIntf}) and, using the above equidistribution result, show Theorem \ref{mainPigeon}. The same principle applies in the case of Theorem \ref{secondPigeon}.

\begin{remark}
\textup{Although Theorems  \ref{secondPigeon}, \ref{mainPigeon} and \ref{main} are stated for the points/interval centres $\tfrac{k}{N}$ for $0 \leq k \leq N-1$, one can see the methods presented in this paper also give the analogous results when considering the points/interval centres $\tfrac{k+ \alpha}{N}$ for any $\alpha \in \mathbb{R} $. The choice $\alpha = \tfrac{1}{2}$ in particular results in considering the points of the sequence $\sqrt{n}$ in the intervals formed via partitioning by cutting $\mathbb{T}$ at the points $\tfrac{k}{N}$ for $0 \leq k \leq N-1$.}
\end{remark}

\begin{remark}
\textup{There are many known results related to Theorem \ref{main} when considering the equidistribution of discrete collections of points on expanding horocycle orbits. An effective equidistribution theorem for rational horocycle points $\{ k/N + iy \}_{k=0}^{N-1}$ in the modular surface is proved by Burrin, Shapira and Yu in \cite[Theorem 1.1]{burrin2020translates}. Using Spectral methods, \cite{burrin2020translates} shows such points equidistribute when the number of such rational points $N$ being considered at height $y$ satisfies $N \gg y^{-(\tfrac{39}{64} + \epsilon)}$ for some $\epsilon > 0$. This contrasts this with Theorem \ref{main} which corresponds to the case when $N \asymp y^{-1}$. Using Dynamical methods, Einsiedler, Luethi and Shah prove effective equistribution results for the  rational points
$$
\bigg\{
\bigg(\text{SL}(2,\mathbb{Z}) \begin{pmatrix}
1 & k/N \\
0 & 1
\end{pmatrix}
\begin{pmatrix}
N^{-1/2} & 0 \\
0 & N^{1/2}
\end{pmatrix}, \frac{k}{N} + \mathbb{Z}\bigg)
: 0 \leq K \leq N-1 \bigg\}
$$
in the more general space SL$(2,\mathbb{Z}) \backslash$SL$(2,\mathbb{R}) \times \mathbb{T}$ (\cite{einsiedler2019primitive}). The equidistribution of such points when projected SL$(2,\mathbb{Z}) \backslash$SL$(2,\mathbb{R})$ is implied by Theorem \ref{main}.
 Finally, in \cite{marklof2003equidistribution}, Marklof and Str\"{o}mbergsson prove for fixed $\delta > 0$, there is full measure set of $\alpha \in [0,1)$ such that the points $\{m\alpha + iy\}_{m=1}^N$ equidistribute in the modular surface as $y \to 0$ whenever $y \asymp N^{-\delta}$.
 }

\end{remark}

\subsection{Outline of Proof}\label{sketch}
Recall the following conditions which are sufficient to give the convergence in distribution of a sequence of point processes.
\cite[Theorem A2.2]{leadbetter1983extremes}

\begin{lemma}[{{\cite[Theorem A2.2]{leadbetter1983extremes}}}]\label{processConvergence}
Let $(\xi_n)_{n=1}^{\infty}$ and $\xi$ be point processes defined on $ \mathbb{R}^+$ with $\xi$ being simple. Suppose 
\begin{enumerate}[(i)]
\item $\mathbb{E}[ \xi_N((a,b]) ] \to \mathbb{E}[ \xi((a,b]) ] $ as $N  \to \infty$ for all $0 \leq a < b < \infty$.
\item $ \mathbb{P}[\xi_N(V) = 0 ] \to  \mathbb{P}[(\xi(V) = 0 ] $ for all $V$ of the form $ \cup_{j=1}^k (a_j,b_j]$ with $0 \leq a_1 < b_1 \leq a_2 < b_2 \leq \dotsc \leq a_k < b_k$. 
\end{enumerate}
Then $\xi_N \xrightarrow{d} \xi$, where $\xrightarrow{d}$ denotes convergence in distribution.
\end{lemma}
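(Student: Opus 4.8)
This lemma is classical (it is the form of Kallenberg's theorem quoted in \cite{leadbetter1983extremes}); rather than merely citing it, here is how I would argue it directly. The plan is to work inside the standard theory of weak convergence of point processes, viewing $\xi_N$ and $\xi$ as random elements of the Polish space $\mathcal{N}$ of locally finite counting measures on $\mathbb{R}^+$ with the vague topology, for which it suffices to show that $(\xi_N)$ is tight and that every subsequential limit has the law of $\xi$. Tightness is exactly where hypothesis (i) is used: for each fixed $b>0$ the numbers $\mathbb{E}_N[\xi_N((0,b])]$ converge, hence are bounded in $N$, so Markov's inequality gives $\sup_N \mathbb{P}(\xi_N((0,b]) > K) \le K^{-1}\sup_N \mathbb{E}_N[\xi_N((0,b])] \to 0$ as $K \to \infty$; since the intervals $(0,b]$ exhaust $\mathbb{R}^+$, this is precisely the relative-compactness criterion for random measures. (That any subsequential limit $\xi'$ is again a point process is automatic, the set of point measures being closed in $\mathcal{N}$.)

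To identify the limits, fix a subsequence with $\xi_{N_i} \xrightarrow{d} \xi'$ and set $D = \{\, t \in \mathbb{R}^+ : \mathbb{P}(\xi'(\{t\}) > 0) > 0 \,\}$, which is countable. For $B = \bigcup_{j=1}^k (a_j,b_j]$ with $0 \le a_1 < b_1 \le \cdots \le a_k < b_k$ and all endpoints in $\mathbb{R}^+\setminus D$, the boundary $\partial B$ is finite and misses $D$, so $\mathbb{P}(\xi'(\partial B)=0)=1$; on counting measures the functional $\mu \mapsto \mathbf{1}(\mu(B)=0)$ is continuous at every $\mu$ with $\mu(\partial B)=0$ (vague convergence then forces $\mu_n(B)\to\mu(B)$, and these values are integers). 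Hence, by the continuous-mapping theorem and bounded convergence, $\mathbb{P}(\xi_{N_i}(B)=0) \to \mathbb{P}(\xi'(B)=0)$, while hypothesis (ii) gives $\mathbb{P}(\xi_{N_i}(B)=0) \to \mathbb{P}(\xi(B)=0)$. Therefore $\mathbb{P}(\xi'(B)=0) = \mathbb{P}(\xi(B)=0)$ for every such $B$.

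Since $\mathbb{R}^+\setminus D$ is dense, the half-open intervals with endpoints there form a dissecting semiring, and the sets $B$ above are exactly the members of the ring it generates; because $\xi$ is simple, Kallenberg's theorem that a simple point process is determined by its avoidance probabilities on a dissecting class yields $\xi' \stackrel{d}{=} \xi$. Thus every subsequential limit equals $\xi$ in distribution, and combined with the tightness from the first step this gives $\xi_N \xrightarrow{d} \xi$. \emph{The main obstacle is precisely this last uniqueness input}: if one does not wish to quote it, one must reprove that the avoidance function on a dissecting class recovers all finite-dimensional laws of a simple point process, via an inclusion--exclusion / monotone-class argument in which simplicity is what controls the multiplicities. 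A secondary, purely bookkeeping point is to verify that the class of test sets on which (ii) is assumed --- finite unions of half-open intervals --- already contains a dissecting class once the endpoints are restricted to the co-countable set $\mathbb{R}^+\setminus D$.
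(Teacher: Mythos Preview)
The paper does not prove this lemma at all: it is stated with the citation \cite{leadbetter1983extremes}[Theorem A.2] and used as a black box. Your proposal supplies the standard Kallenberg-type argument (tightness from (i) via Markov, identification of subsequential limits from (ii) via the fact that a simple point process is determined by its avoidance function on a dissecting semiring), which is correct and is exactly the proof one finds behind the cited reference; there is nothing to compare against in the paper itself.
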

For our processes $\xi_N$ defined by (\ref{definition of xi}), we will see that condition (i) merely amounts to the fact the average number of point of an affine unimodular lattice points in a triangle of area $s$ is $s$. We prove this more generally in Lemma \ref{Integral0}.

Turning to (ii), we define the measures $(\nu_N)_{n=1}^{\infty}$ on $X$ by 
\begin{equation}
\nu_N(f) = \int_X f \; d\nu_N := \frac{1}{N} \sum_{k=0}^{N-1} f(\Gamma n( \tfrac{k}{N})a(N) ).
\end{equation}
In \textsection\ref{Pigeonhole Stats}, for a given set $V$ as in Lemma \ref{processConvergence} (ii), we show how to choose the function $f :X \to \mathbb{R}$ such that $\nu_N(f)$ approximates $\mathbb{P}(\xi_N(B) = 0)$. The same is true in proving Theorem \ref{secondPigeon}, where we choose a function $f:X \to \mathbb{R}$ such that $\nu_N(f)$ approximates $\mathbb{P}(Y^N_s = j)$. By taking $N \to \infty$ we can then show the required limiting values are attained using Theorem \ref{main}. For the remainder of this section, we thus focus on the proof of Theorem \ref{main}.

\begin{proof}[Proof outline of Theorem \ref{main} for $\sigma(t) = n(t)$]
By a standard approximation argument, it suffices to show we have  $\nu_N(f) \to \int_X f \; dm_X$ for all $f \in C_c(X)$. This allows us to reduce to understanding weak-star limit points of the sequence of measure $(\nu_N)$. In particular it suffices, by the Banach-Alaoglu Theorem, to show any accumulation point $\nu$ of the measures $(\nu_N)$ is $m_X$. 

As is shown in Proposition \ref{u(1)Inv}, moving from $n(\tfrac{k}{N})a(N)$ to $n(\tfrac{k+1}{N})a(N)$ corresponds, up to some negligible error, to right multiplication by the unipotent element $u(1)$, where
\begin{equation}\label{u(t)}
u(t) := 
\bigg(
\begin{pmatrix}
1 & 2t \\
0 & 1
\end{pmatrix}
, (0,0) \bigg)
\end{equation}
for $t \in \mathbb{R}$. It will follow that any such $\nu$ is invariant under the action of the subgroup $\{u(k)\}_{k \in \mathbb{Z}}$. 

The right action of this subgroup on $X$ is mixing, as is shown in Lemma \ref{Mixing}. A consequence of this is that the system $(X,U_t,m_X)$, where $U_t(x) = xu(t)$, is disjoint from the linear rotation flow on $[0,1)$ in the sense introduced by Furstenberg in \cite{furstenberg1967disjointness} (as is shown in Lemma \ref{Disjoint}). To be precise, the linear rotation flow $R_t:[0,1) \to [0,1)$ is given by $R_t(s) = \{s+t\}$, where $\{ \cdot \}$ gives the fractional part of a real number. This is used to extend to the flow $U_t$ to $\widetilde{U}_t: X \times [0,1)\to X \times [0,1)$ given by $\widetilde{U}_t(\Gamma g,s) = (\Gamma g u(t),\{s+t\})$. Disjointness then tells us that
the only $\widetilde{U}_t$-invariant measure on $ X \times [0,1) $ whole marginals (projections to $X$ and $[0,1)$) are $m_X$ and the Lebesgue measure $ds$ on $[0,1)$ is the product measure $m_X \times ds$.

To utilise this fact, we consider the corresponding \textit{special flow under the ceiling function} $1$: namely the flow $T_t:X \times [0,1) \to X \times [0,1)$ given by 
$$T_t(\Gamma g,s) = (\Gamma g u(\lfloor s + t \rfloor ), \lbrace s+t \rbrace ).$$
This flow has $\nu \times ds$ as an invariant measure and is also conjugate to the flow $\widetilde{U}_t$. Keeping track of the measure $\nu \times ds$ under this conjugation map (described explicitly in the proof of Proposition \ref{special flow}) and using Theorem \ref{known}, we see the resulting measure on $X \times [0,1)$ indeed has marginals $m_X$ and $ds$ and so is the product measure $m_X \times ds$. This in turn gives us that $\nu \times ds = m_X \times ds$ by applying the inverse of the conjugation map and so $\nu = m_X$ as required.
\end{proof}

\section{The Space $X$}\label{X}
Here we overview, for completeness, some of the basic properties of the space $X = \Gamma \backslash G$ which we will be using. More details can be found in \cite[\S 3.1]{marklof2007distribution} and \cite[\S 1]{strombergsson2015effective}.

\begin{itemize}
    \item $X$ is a $\mathbb{T}^2 = \mathbb{R}^2 / \mathbb{Z}^2 $ bundle over the base space $B :=\text{SL}(2,\mathbb{Z}) \backslash \text{SL}(2,\mathbb{R})$. If $\mathcal{F}$ is a fundamental domain for the left-action of SL$(2,\mathbb{Z})$ on SL$(2,\mathbb{R})$, then a fundamental domain for the left-action of $\Gamma$ on $G$ is 
    \begin{align*}
        \widetilde{\mathcal{F}} =&  \{ (I_2,x)(M,0) : x \in [0,1), M \in \mathcal{F} \} \\ =& \{ (M,x) \in G : M \in F \text{ and } x \in [0,1)^2M \}.
    \end{align*}
    We fix such $\mathcal{F}$ and $\widetilde{\mathcal{F}}$ for the remainder of paper.
    \item Let $m_{\text{SL}(2,\mathbb{R})}$ be the Haar measure on the unimodular group $\text{SL}(2,\mathbb{R})$, normalised so that $m_{\text{SL}(2,\mathbb{R})}(\mathcal{F}) = 1$. Using Fubini's theorem and the translation invariance, it is easy to see $m_G = m_{\text{SL}(2,\mathbb{R})} \times dx$ is a (left) Haar measure on X, where $dx$ represents the Lebesgue measure on $\mathbb{R}^2$. The right invariant measure $m_X$ on $X$ is obtained by then restricting this measure $m_G$ to $\widetilde{\mathcal{\mathcal{F}}}$.
    \item There exists a left-invariant Reimannian metric $d_G$ on $G$ inducing the same topology on $G$ as the product topology on the space SL$(2,\mathbb{R}) \times \mathbb{R}^2$. Fixing one such metric $d_G$, we construct a metric $d$ on $X$ via defining 
    $$
    d(\Gamma g_1, \Gamma g_2) := \inf_{\gamma \in \Gamma} d_G(\gamma g_1,g_2).
    $$
    For a more explicit details on these constructions, see \cite[\S 9.3]{ErgWithNt}. Throughout the remaining sections, continuity of functions $f:X \to \mathbb{R}$ will mean continuity with respect to this metric. 
    \item Any element $(M, x) \in G$ gives us an affine unimodular lattice in $\mathbb{R}^2$ - namely the lattice
    $\mathbb{Z}^2M + x$. Moreover, for any other $(M', x') \in G$, the lattice associated to $(M',x')(M, x)$ is given by $\mathbb{Z}^2M'M + x'M + x$. These two lattices are identical if and only if $(M' ,x') \in \Gamma$. Thus we have a natural identification between elements of $X = \Gamma \backslash G$ and such lattices. We will use this identification in \textsection\ref{Pigeonhole Stats} to construct the functions $f:X \to \mathbb{R}$ to which we will apply Theorem \ref{main}.
\end{itemize}

\section{The Special flow under 1}\label{The Special Flow under 1}

Throughout this section, whenever $(X_1, \mu_1)$ is a measure space, $X_2$ is a measurable space and $\mathcal{T}:X_1 \to X_2$ is a measurable map, we will define the measure $\mathcal{T}_{*}\mu_1$ on $X_2$ by 
$$
\mathcal{T}_{*}\mu_1(A) = \mu_1(\mathcal{T}^{-1}(A))
$$
for any measurable $A \subset X_2$.

\begin{proposition}\label{u(1)Inv} 
Let $\sigma$ be a non-linear horocycle section of period $p$. Define the measures
$(\nu_N)_{n=1}^{\infty}$ on $X$ by setting 
\begin{equation}\label{nu}
\nu_N(f) = \int_X f \; d\nu_N := \frac{1}{pN} \sum_{k=0}^{pN-1} f(\Gamma \sigma( \tfrac{k}{N})a(N) )
\end{equation}
for any continuous bounded $f:X \to \mathbb{C}$. 
Then, any weak-star limit point of the measures defined in (\ref{nu}) is invariant under the map $T:X \to X$ given by 
\begin{equation}\label{T}
T(\Gamma g) = \Gamma g u(1)
\end{equation}
where $u(1)$ is defined by (\ref{u(t)}).

\end{proposition}

To see this, we will need the following Lemma.

\begin{lemma}\label{uniformcontinuity}
Any $f \in C_c(X)$ is uniformly continuous in the $\mathbb{T}^2$ direction. More precisely, for any $\epsilon > 0$ we can find $\delta > 0$ such that for all $M \in $SL$(2,\mathbb{R})$ and $u,v \in \mathbb{T}^2$ with $d_{\mathbb{T}^2}(u,v) \leq \delta$, we have 
$$
| f((I_2,u)(M,0)) - f((I_2,v)(M,0)) | < \epsilon.
$$
\begin{proof}
Take $f \in C_c(X)$ and let $K$ be the projection of the support of $f$ to the base space $B $. $K$ is a compact set and so the map $K \times \mathbb{T}^2 \ni (M,x) \longmapsto f((I_2,x)(M,0)) \in \mathbb{R}$ is uniformly continuous. This means $f$ is uniformly continuous in the fibre direction over $K$ in the sense that for any $\epsilon > 0$ we can find $\delta > 0$ such that for any $M \in K$ and $u,v \in \mathbb{T}^2$ with $d_{\mathbb{T}^2}(u,v) < \delta$ we have 
\begin{equation*}
    | f((I_2,u)(M,0)) - f((I_2,v)(M,0)) | < \epsilon.
\end{equation*}
Hence, since $f$ is identically zero on the fibre above all base points outside of $K$, $f$ is in fact uniformly continuous in the fibre direction over all of $B $.
\end{proof}

\end{lemma}

\begin{proof}[Proof of Proposition \ref{u(1)Inv}]
Suppose $\nu$ is a weak-star limit of the sequence of measures $(\nu_{N_j})$ where $N_j \nearrow \infty$. 

Now, for any $N \in \mathbb{N}$ and $0 \leq k \leq pN-1$, we will see via (\ref{matrixeq1}) and (\ref{matrixeq2}) that the two points $ \sigma(\tfrac{k+1}{N})a(N)$ and $ \sigma(\tfrac{k}{N})a(N)u(1) $ are identical in their SL$(2,\mathbb{R})$ components and, as the functions $x$ and $y$ are smooth and so bounded and Lipschitz on $[0,p]$, differ by a distance $O(\tfrac{1}{N} )$ in the $ \mathbb{T}^2$ direction. Using this, we will see the measures $\{T_{*}\nu_{N_j}\}_{j \in \mathbb{N}}$ given by
$$
T_{*}\nu_{N_j}(f) =  \frac{1}{pN_j} \sum_{k=0}^{pN_j-1} f(\Gamma \sigma(\tfrac{k}{N_j})a(N_j)u(1)), \hspace{0.5cm} f \in C_c(X)
$$
will also converge to $\nu$ as $j \to \infty$ in the weak-star topology, since, for compactly $f \in C_c(X)$, $f(\sigma(\tfrac{k}{N_j})a(N_j)u(1))$ and $f(\sigma(\tfrac{k+1}{N})a(N))$ will be uniformly close across all $0 \leq k \leq pN_j-1$ (\ref{closeacrossk}). This will follow from the Lemma \ref{uniformcontinuity}.

Indeed, we have
\begin{align}
\sigma(\tfrac{k+1}{N})a(N) = &
\bigg(
\begin{pmatrix}
N^{-1/2}  & 2(k+1)N^{-1/2}  \\
0 & N^{1/2}
\end{pmatrix}
, \bigg(  x\big(\tfrac{k+1}{N})N^{-1/2} , y\big(\tfrac{k+1}{N}\big) N^{1/2} \bigg) \bigg) \nonumber \\
= & \big( I_2 , \big(x(\tfrac{k+1}{N}) , y(\tfrac{k+1}{N}) - \tfrac{2(k+1)}{N}x(\tfrac{k+1}{N}) \big) \big) \label{matrixeq1} \\
\times & 
\bigg(
\begin{pmatrix}
N^{-1/2}  & 2(k+1)N^{-1/2} \\
0 & N^{1/2}
\end{pmatrix}
, \big(0,0 \big) \bigg) \nonumber \\ \nonumber
\end{align}
and
\begin{align}
\sigma(\tfrac{k}{N})a(N)u(1) = &
\big( I_2 , \big(x(\tfrac{k}{N}) , y(\tfrac{k}{N}) - \tfrac{2k}{N}x(\tfrac{k}{N}) \big) \big) \label{matrixeq2} \\
\times & \bigg(
\begin{pmatrix}
N^{-1/2}  & 2(k+1)N^{-1/2}  \\
0 & N^{1/2}
\end{pmatrix}
, \big(0,0 \big) \bigg). \nonumber \\ \nonumber
\end{align}
 
Take $f \in C_c(X)$ and let $\epsilon > 0$. Choose $\delta > 0$ as given by Lemma \ref{uniformcontinuity} for such $\epsilon$. By the fact  $x,y$ are bounded and Lipschitz on $[0,p]$, for any sufficiently large $j$ sufficiently large we have that
\begin{equation}\label{closeacrossk}
    d_{\mathbb{T}^2}\big( 
 \big(x(\tfrac{k+1}{N_j}) , y(\tfrac{k+1}{N_j}) - \tfrac{k+1}{N_j}x(\tfrac{k+1}{N_j})  \big) ,
 \big(x(\tfrac{k}{N_j}) , y(\tfrac{k}{N_j}) - \tfrac{k}{N_j}x(\tfrac{k}{N_j}) \big)  \big) < \delta
\end{equation}
 for all $0\leq k \leq N_j-1$. Hence, for such $j$, 
\begin{align*}
& |T_{\ast}(\nu_{N_j})(f) - \nu_{N_j}(f) | \\
\leq & \frac{1}{pN_j}\bigg\vert \sum_{k=0}^{pN_j-1} f(\Gamma \sigma(\tfrac{k}{N_j})a(N_j)u(1)) - f(\Gamma \sigma(\tfrac{k}{N_j})a(N_j))  \bigg\vert \\
\leq & \frac{1}{pN_j}\bigg\vert \sum_{k=0}^{pN_j-1} f(\Gamma \sigma(\tfrac{k}{N_j})a(N_j)u(1)) - f(\Gamma \sigma(\tfrac{k+1}{N_j})a(N_j))  \bigg\vert + O\big(\tfrac{\parallel f \parallel_{\infty}}{N_j}\big) \\
 \leq & \epsilon + O\big(\tfrac{\parallel f \parallel_{\infty}}{N_j}\big)
\end{align*}
So $ \limsup_{j \to \infty} |T_{\ast}(\nu_{N_j})(f) - \nu_{N_j}(f) | \leq \epsilon$ for any $\epsilon > 0$ and so 
$ T_{\ast}(\nu) = \lim_{j \to \infty} \; T_{\ast}(\nu_{N_j})(f) = \lim_{j \to \infty} \; \nu_{N_j}(f) = \nu(f) $ as required. 
\end{proof}

Next, as mentioned in \textsection\ref{sketch}, we will use the special flow under the ceiling function $1$ to show that any weak-star limit point $\nu$ of the measures (\ref{nu}) is the Lebesgue measure. Specifically, the special flow will give us a system with invariant measure $\nu \times ds$ conjugate to a joining of the systems $(X,U_t, m_X)$ and $([0,1), R_t, ds)$, where $U_t(\Gamma g) = \Gamma g u(t) $ and $R_t(s) = \{ s + t \}$. This will imply $ \nu = m_X$ due the following.

\begin{lemma}\label{Disjoint}
The flows $(X,U_t, m_X)$ and $([0,1), R_t, ds)$ are disjoint. 
\end{lemma}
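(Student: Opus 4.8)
The plan is to derive Lemma~\ref{Disjoint} from the mixing of $(X, U_t, m_X)$ proved in \textsection\ref{Mixing of the flow u(t)}, via the classical principle — going back to Furstenberg, and in the form used by Kleinbock in \cite{kleinbock1999badly} — that a weakly mixing system is disjoint from every system with purely discrete spectrum. Mixing of the flow $\{U_t\}$ implies weak mixing, so the only measurable eigenfunctions of $\{U_t\}$ on $L^2(X, m_X)$ are the constants (equivalently, $\{U_t\}$ admits no non-trivial eigenvalue character). On the other side, $([0,1), R_t, ds)$ is an ergodic rotation whose $L^2$-space is the closed linear span of the characters $e_n(s) := e^{2\pi i n s}$, $n \in \mathbb{Z}$, and these are eigenfunctions, $e_n \circ R_t = e^{2\pi i n t} e_n$; in particular $\int_0^1 e_n = 0$ for $n \neq 0$.

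Now let $\rho$ be an arbitrary joining: a $(U_t \times R_t)$-invariant Borel probability measure on $X \times [0,1)$ with marginals $m_X$ and $ds$. Because the $[0,1)$-marginal of $\rho$ is Lebesgue measure, functions of the form $f(x)\, e_n(s)$ with $f \in C_c(X)$ and $n \in \mathbb{Z}$ have dense linear span in $L^2(X \times [0,1), \rho)$, so it suffices to check that
\[
\int_{X \times [0,1)} f(x)\, \overline{e_n(s)}\, d\rho \;=\; \Big( \int_X f\, dm_X \Big)\Big( \overline{\int_0^1 e_n(s)\, ds} \Big)
\]
for all such $f$ and $n$. For $n = 0$ this is immediate from the marginal condition. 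For $n \neq 0$, let $\mathcal{A}$ be the sub-$\sigma$-algebra of $X \times [0,1)$ pulled back from the $X$-coordinate; it is $(U_t \times R_t)$-invariant since the flow acts diagonally, and as $m_X$ is $U_t$-invariant, the conditional expectation $P := \mathbb{E}_\rho[\,\cdot\mid\mathcal{A}]$, viewed as a map into $L^2(X, m_X)$, intertwines the flows: $P(\Phi \circ (U_t \times R_t)) = (P\Phi) \circ U_t$ for all $t$. Applying this to $\Phi(x,s) := \overline{e_n(s)}$, which satisfies $\Phi \circ (U_t \times R_t) = e^{-2\pi i n t}\, \Phi$, gives $(P\Phi) \circ U_t = e^{-2\pi i n t}\, P\Phi$; thus $P\Phi$ is an eigenfunction of $\{U_t\}$ attached to the non-trivial character $t \mapsto e^{-2\pi i n t}$, and weak mixing forces $P\Phi = 0$. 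Since $f(x)$ is $\mathcal{A}$-measurable, $\int f(x)\, \overline{e_n(s)}\, d\rho = \int_X f \cdot (P\Phi)\, dm_X = 0$, matching the right-hand side. Hence $\rho = m_X \times ds$, which is the assertion.

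I do not expect a genuine obstacle inside this lemma: all the analytic content is the mixing of $\{U_t\}$, handled separately in \textsection\ref{Mixing of the flow u(t)}, and the remaining ingredients — density of the tensors $f \otimes e_n$ in $L^2(\rho)$, and the intertwining of $P$ with the diagonal flow (the standard fact that conditional expectation onto a flow-invariant sub-$\sigma$-algebra commutes with the flow) — are routine. The only mildly delicate point is that last intertwining identity, which I would verify directly from $U_t \times R_t$ acting diagonally together with the $U_t$-invariance of $m_X$. If one prefers, the whole lemma can instead be quoted from the literature as the disjointness of a mixing (or merely weakly mixing) flow from a linear flow on a torus.
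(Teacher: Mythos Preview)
Your argument is correct, and it is the classical ``weak mixing is disjoint from discrete spectrum'' route: you produce, via conditional expectation onto the $X$-factor, an eigenfunction of $\{U_t\}$ for each nonzero frequency $n$, and kill it with weak mixing. The paper takes a different and shorter path. Both arguments start from the mixing of $(X,U_t,m_X)$ (which in the paper is established \emph{inside} the proof of the lemma by citing Kleinbock, not in \textsection\ref{Mixing of the flow u(t)} as you suggest --- that section only finishes Theorem~\ref{main}). But rather than work with eigenfunctions of the continuous flow, the paper passes to the time-$1$ maps: any joining of the flows is in particular a joining of $(X,U_1,m_X)$ with $([0,1),R_1,ds)$, and $R_1$ is the \emph{identity}. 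One then only needs the elementary fact that an ergodic transformation is disjoint from the identity map (disintegrate the joining over the identity factor; each fibre measure is $U_1$-invariant, hence equals $m_X$ by ergodicity). This bypasses the spectral/eigenfunction machinery entirely. Your approach is more general --- it would still work if the second factor were any Kronecker system rather than specifically the rotation of period $1$ --- whereas the paper's trick exploits the coincidence that the rotation has period exactly $1$, buying a two-line proof.
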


To see this, we will use the following Lemmas.

\begin{lemma}\label{Mixing}
The system $(X,U_t,m_X)$ is mixing.

\begin{proof}
This follows from applying the proposition from \cite[\textsection 2.2]{kleinbock1999badly} to the system $(X,U_t, m_X)$ (instead of a diagonal flow) and using that the horocycle flow on $B$ is ergodic.
\end{proof}

\end{lemma}

\begin{lemma}[{{\cite[Proposition 2.2]{Disjointness}}}]\label{ergodicdis}
Let $T:X \to X$ be an ergodic measure preserving transformation with respect to the measure $m_X$. Then, $(X,T,m_X)$ is disjoint from any measure preserving system given by the identity map $I:Y \to Y$ on a probability space $(Y,\mu)$.
\end{lemma}

\begin{proof}[Proof of Lemma \ref{Disjoint}]
Let $\mu$ be a joining of $(X,U_t, m_X)$ and $([0,1), R_t, ds)$: an invariant measure on $X \times [0,1)$ for the map $\widetilde{U}_t(\Gamma g,s) = (\Gamma g u(t),\{s+t\})$ whose marginals are $m_X$ and $ds$.
$\mu$ will be invariant under the map $\widetilde{U}_1$, and so is a joining of the systems $(X,U_1,m_X)$ and $([0,1),R_1,ds).$ $R_1 $ is the identity and, by Lemma \ref{Mixing}, $U_1$ is mixing and hence ergodic. Thus it follows from Lemma \ref{ergodicdis} that $\mu = m_X \times ds$ as required.
\end{proof}

We are now in a position to prove the following.

\begin{proposition}\label{special flow}
Any weak-star limit point $\nu$ of the measures (\ref{nu}) is the Haar measure $m_X$ on $X$. 
\end{proposition}

As mentioned, the main construction we will use in this proof is the special flow under the ceiling function 1.

\begin{lemma}[{{\cite[Lemma 9.23]{ErgWithNt}}}]\label{flowintro}
Let $\nu$ be a finite measure on $X$ which is invariant under $u(1)$. Then $\nu \times ds $ is an invariant measure for the map $
T_t:X \times [0,1) \to X \times [0,1)$ given by $$ T_t(\Gamma g,s) = (\Gamma g u(\lfloor s + t \rfloor ), \lbrace s+t \rbrace ).
$$
\begin{proof}
If $\nu(X) > 0$, the result is given by \cite[Lemma 9.23]{ErgWithNt} (which applies to probability measures and hence any non-zero finite measure via normalizing). Otherwise, the result is trivial as $\nu \times ds$ is the zero measure.
\end{proof}

\end{lemma}

\begin{proof}[Proof of Proposition \ref{special flow}]
Note that, for a weak-star limit point $\nu$ of the probability measures in (\ref{nu}), we have $\nu(X) \in [0,1]$. Thus, Lemma \ref{flowintro} implies $\nu \times ds$ is $T_t$ invariant, where $T_t$ is as in the statement of Lemma \ref{flowintro}. 

Now let $\psi: X \times [0,1) \to X \times [0,1) $ be given by 
$\psi(\Gamma g, s) = (\Gamma g u(s) , s)$ and recall the extension of the flow $U_t$ to $X \times [0,1)$ is given by $\widetilde{U}_t(x,s) = (xu(t),\{s+t\})$. Using that $s + t = \lfloor s + t \rfloor + \lbrace s + t \rbrace$ we see that $ \psi \circ T_t = \widetilde{U}_t \circ \psi$, meaning $T_t$ and $\widetilde{U}_t$ are conjugate via $\psi$ and $\mu := \psi_{\ast}(\nu \times ds) $ is an invariant measure for the flow $\widetilde{U}_t$. Denote the projection maps from $X \times [0,1)$ to $X$ and $[0,1)$ by $P_X$ and $P_{[0,1)}$ respectively. ${P_{[0,1)}}_{\ast}(\mu)$ is invariant under all $R_t: [0,1) \to [0,1) $ with $t \in \mathbb{R}$ and so, if it is a probability measure, it is the Lebesgue measure $ds$ on $[0,1)$.

We now show $({P_{X}})_{\ast}\mu $ is the Haar measure $m_X$ on $X$, which in turn shows $\mu$ is a probability measure. To do this, take $f \in C_c(X)$ and let $N_j \nearrow \infty$ be a sequence of natural numbers such that $\nu_{N_j}$ converges weak-star to $\nu$ as $j \to \infty$. Then
\begin{align}
\nonumber \int_X f \; d ({P_X})_{\ast}(\mu) & = \int f \circ P_X \circ \psi \; d (\nu \times ds)  \\
\nonumber & = \int_0^1 \int_X f(xu(s)) \; d \nu(x) ds   \\  
\nonumber & = \int_0^1 \lim_{j \to \infty} \frac{1}{pN_j} \sum_{k=0}^{pN_j -1} f(\Gamma \sigma(\tfrac{k}{N_j})a(N_j)u(s)) \; ds \\
& =  \lim_{j \to \infty} \int_0^1  \frac{1}{pN_j} \sum_{k=0}^{pN_j -1} f(\Gamma \sigma(\tfrac{k}{N_j})a(N_j)u(s)) \; ds, \label{bob} 
\end{align}
where the last equality follows from the dominated convergence theorem (as $f$ is bounded).

Similarly to as in the proof of Proposition \ref{u(1)Inv},
$ n(\tfrac{k+s}{N})a(N)
$
and
$
n(\tfrac{k}{N})a(N)u(s) 
$
have the same base point and are a distance at most $O(\tfrac{1}{N})$ apart in the $\mathbb{T}^2$ fibre direction whenever $s \in [0,1]$. Hence, by Lemma \ref{uniformcontinuity}, given any $\epsilon > 0$ we can ensure
\begin{equation}\label{steve}
    \int_0^1 \frac{1}{pN_j}\sum_{k=0}^{pN_j -1} f(\Gamma \sigma(\tfrac{k+s}{N_j})a(N_j) ) \; ds
\end{equation}
 and the integral in (\ref{bob}) differ by at most $\epsilon$ provided $j$ is sufficiently large. But, by making the substitution $t = (s+k)/N_j$ we see that
 \begin{align}
     \int_0^1 \frac{1}{pN_j}\sum_{k=0}^{pN_j -1} f(\Gamma \sigma(\tfrac{k+s}{N_j})a(N_j) ) \; ds & = \frac{1}{pN_j}\sum_{k=0}^{pN_j -1} \int_0^1 f(\Gamma \sigma(\tfrac{k+s}{N_j})a(N_j) ) \;  ds \nonumber \\
     & = \frac{1}{p} \sum_{k=0}^{pN_j -1} \int_{\tfrac{k}{N_j}}^{\tfrac{k+1}{N_j}} f(\Gamma \sigma(t) a(N_j)) \; dt \nonumber \\
     & =\frac{1}{p}\int_0^p f(\Gamma \sigma(t) a(N_j)) \; dt \label{charles}
 \end{align}
and, by Theorem \ref{known}, (\ref{charles}) converges to $\int f \; d m_X$ as $j \to \infty$. Hence, we have shown that for any $\epsilon > 0$
$$
 \bigg | \int_X f \; d ({P_X})_{\ast}(\mu) - \int_X f \; d m_X \bigg| \leq \epsilon.
$$
Therefore $({P_{X}})_{\ast}(\mu) = m_X$ and so $\mu$ is a joining of $(X,U_t, m_X)$ and $(\mathbb{T}, R_t, ds)$. Since Lemma \ref{Disjoint} shows these two systems are disjoint, we conclude $\mu = m_X \times ds$. To see finally that this implies $\nu = m_X$ note, since $m_X$ is invariant under the right-action of $G$, we have
\begin{align*}
\int g \; d(\nu \times ds) = \int g \; d \psi^{-1}_{\ast}(\mu) = \int _{0}^1 \int_X g(xu(-s),s) \; dm_X(x) ds \\
=  \int _{0}^1 \int_X g(x,s) \; dm_X(x) ds = \int g \; d(m_X \times ds)
\end{align*}
for any $g \in C_c(X \times \mathbb{T})$. Thus $\nu \times ds = m_X \times ds$ and so $\nu = m_X$.

\end{proof}

\section{Completing the Proof of Theorem \ref{main}}\label{Mixing of the flow u(t)}

\begin{proof}[Proof of Theorem \ref{main}]
By the by the Banach-Alaoglu Theorem, any subsequence of the measures $(\nu_N)$ defined in (\ref{nu}) have a further subsequence which converges weak-star to some limiting measure $\nu$. By Proposition \ref{special flow}, $\nu = m_X$. This shows the sequence of measures $(\nu_N)$ indeed converges weak-star to $m_X$.

Notice that for any constant function $f:X \to \mathbb{R}$, it is immediate that $\int_X f \; d\nu_N \to \int f \; dm_X$ as $N \to \infty$. This convergence therefore also holds for continuous functions which are constant outside of a compact set, being the sum of a constant function and a function in $C_c(X)$. Now, let $f:X \to \mathbb{R}$ be a bounded continuous function and let $\epsilon > 0$. Then we can find continuous functions $f_-, f_+ : X \to \mathbb{R}$, which are constant outside some compact set, with $f_- \leq f \leq f_+$ and for which 
$$
\int_X f_+ - f_- \; dm_X < \epsilon .
$$
Then we have
\begin{align*}
\int f \; dm_X - \epsilon \leq \int f_- \; dm_X = \liminf_{n \to \infty} \nu_N(f_-) \leq \liminf_{n \to \infty} \nu_N(f)  \\
\leq \limsup_{n \to \infty} \nu_N(f) \leq \limsup_{n \to \infty} \nu_N(f_+) = \int f_+ \; dm_X \leq \int f \; dm_X + \epsilon
\end{align*}
meaning
$$ 
 \liminf_{n \to \infty} \nu_N(f) =  \limsup_{n \to \infty} \nu_N(f) = \int f dm_X
$$ 
as our choice of $\epsilon > 0$ was general. Thus, $(\nu_N)$ converges weakly to $m_X$ and so $\nu_N(f) \to \int f \; dm_X$ as $N \to \infty$ for all piecewise continuous $f:X \to \mathbb{R}$ by the continuous mapping theorem.

Finally, let $C \geq 1$ and $(M_N)_{N=1}^{\infty}$ be a sequence satisfying $\tfrac{1}{C}N \leq M_N \leq CN$ for all $N \in \mathbb{N}$. Take an arbitrary subsequence $(M_{N_j})_{j=1}^{\infty}$ of the sequence $(M_N)$. By compactness of the interval $[\tfrac{1}{C},C]$ we can find a further subsequence of the $(M_{N_j})$, which we will still index by $N_j$, such that $\frac{M_{N_j}}{N_j} \to c \in [\tfrac{1}{C},C]$ as $j \to \infty$. Let $f \in C_c(X)$ and define $h \in C_c(X)$ by setting
$$
h(\Gamma g) := f(\Gamma g a(c)).
$$
Note that 
$$
\nu_{N_j}(h) = \frac{1}{pN_j}\sum_{k=0}^{pN_j-1} f(\Gamma \sigma(\tfrac{k}{N_j})a(cN_j)) .
$$
Using the metric $d$ defined in \S \ref{X}, we see 
$$
d( \Gamma \sigma(\tfrac{k}{N_j})a(cN_j), \Gamma \sigma(\tfrac{k}{N_j})a(M_{N_j})) \leq d_G\Big(e, a\Big(\tfrac{M_{N_j}}{cN_j}\Big)\Big) \to 0
$$
uniformly in $k$ as $j \to \infty$. Using this and the fact that, as $f$ is continuous and compactly supported, $f$ is uniformly continuous, we have
$$
\bigg| \frac{1}{pN_j}\sum_{k=0}^{pN_j-1} f(\Gamma \sigma(\tfrac{k}{N_j})a(cN_j)) - \frac{1}{pN_j}\sum_{k=0}^{pN_j-1} f(\Gamma \sigma(\tfrac{k}{N_j})a(M_{N_j})) \bigg |\to 0
$$
as $j \to \infty$. Thus, given $\nu_{N_j}(h) \to \int h \; dm_X$ and $\int_X h \; dm_X = \int_X f \; dm_X$ by the right invariance of $m_X$, we have
\begin{equation}\label{N_j}
\frac{1}{pN_j}\sum_{k=0}^{pN_j-1} f(\Gamma \sigma(\tfrac{k}{N_j})a(M_{N_j})) \to \int f \; dm_X
\end{equation}
as $j \to \infty$. Since our original subsequence was arbitrary, (\ref{N_j}) holds in the case where $N_j = j$ as required. This can  also be extended to any piecewise continuous function $f:X \to \mathbb{R}$ by the standard approximation argument above. 
\end{proof}

\section{Pigeonhole Statistics}\label{Pigeonhole Stats}
As mentioned in \textsection\ref{Ergodic Theory}, to prove Theorem \ref{processConvergence}, we are going to apply Theorem \ref{main} to a family of functions $f:X \to \mathbb{R}$ such that $\nu_N(f)$ gives us, up to some error of $o(1)$ in $N$, $\mathbb{P}(\xi_N((a,b]) = 0)$.

For a non-negative measurable function $f:\mathbb{R}^2 \to \mathbb{R}$, we define $\widehat{f}:X \to \mathbb{R}$ by setting $\widehat{f}(x)$ to be the sum of all the function values at the lattice points corresponding to $x \in X$. Explicitly
$$
\widehat{f}(\Gamma(M, x)) = \sum_{m \in \mathbb{Z}^2} f(mM + x).
$$
For such functions, the following simple version of Siegel's formula holds (\cite{siegel1945mean}). 

\begin{lemma}\label{Integral0}
Let $f:\mathbb{R}^2 \to \mathbb{R}$ be a non-negative measurable function. Then
$$
\int_X \widehat{f} \; dm_X = \int_{\mathbb{R}^2} f \; dx.
$$

\begin{proof}
Using the non-negativity of $f$, the fact SL$(2,\mathbb{R})$ consists of matrices on determinant 1 and the form of the Haar measure $m_X$ described in \S \ref{X}, we see 
\begin{align*}
    \int_X \widehat{f} \; dm_X =&  \int_{\mathcal{F}} \int_{[0,1)^2M} \widehat{f} \; dx \; dm_{\text{SL}(2,\mathbb{R})} \\
    = & \int_{\mathcal{F}} \int_{[0,1)^2} \sum_{m \in \mathbb{Z}^2} f((m + x)M) \; dx \; dm_{\text{SL}(2,\mathbb{R})}.
\end{align*}
The result then follows from the fact that
$$
\int_{[0,1)^2} \sum_{m \in \mathbb{Z}^2} f((m + x)M) \; dx = \int_{\mathbb{R}^2} f(x) \; dx.
$$
\end{proof}

\end{lemma}

For a set $A \subset \mathbb{R}^2$ we denote by $f_A:X \to \mathbb{R}$ the function $\widehat{\chi_A}$, where $\chi_A$ denotes the indicator function of the set $A$. 
Following \cite[\textsection4]{marklof2007distribution}, we see how such functions can be used to approximate the values of the functions $S_N$. This will allow us to show the random variables $Y_s^N$ defined by (\ref{Y_s^N}) converge to the same limit of a sequence of random variables $\widetilde{Y}_s^N$ which will be defined by evaluating such a function $f_A$ at the points $n(k/N)a(N)$ uniformly at random.

Indeed, fixing some $s > 0$ and setting $N' = \lfloor sN \rfloor$, the counting function $S_N(x_0,s)$ defined in (\ref{S_N}) is given by
\begin{equation}\label{couting}
S_N(x_0,s) = \sum_{n=1}^{N'} \sum_{m \in \mathbb{Z}} \chi_{[-\tfrac{1}{2} ,\tfrac{1}{2} )} ( N(\sqrt{n} - x_0 + m) ).
\end{equation}
It turns out $S_N(x_0,s) $ can be well approximated by $f_{\tau}(n(x_0)a(N))$, where 
\begin{equation}
    \tau = \tau(s): =  \{(x,y) \in \mathbb{R}^2 : x \in [0,\sqrt{s}], y \in [-x,x] \}
\end{equation}
is a triangle of area $s$ in the plane (see Figure \ref{Shot1}).

\begin{figure}\label{Shot1}
\includegraphics[scale=0.35]{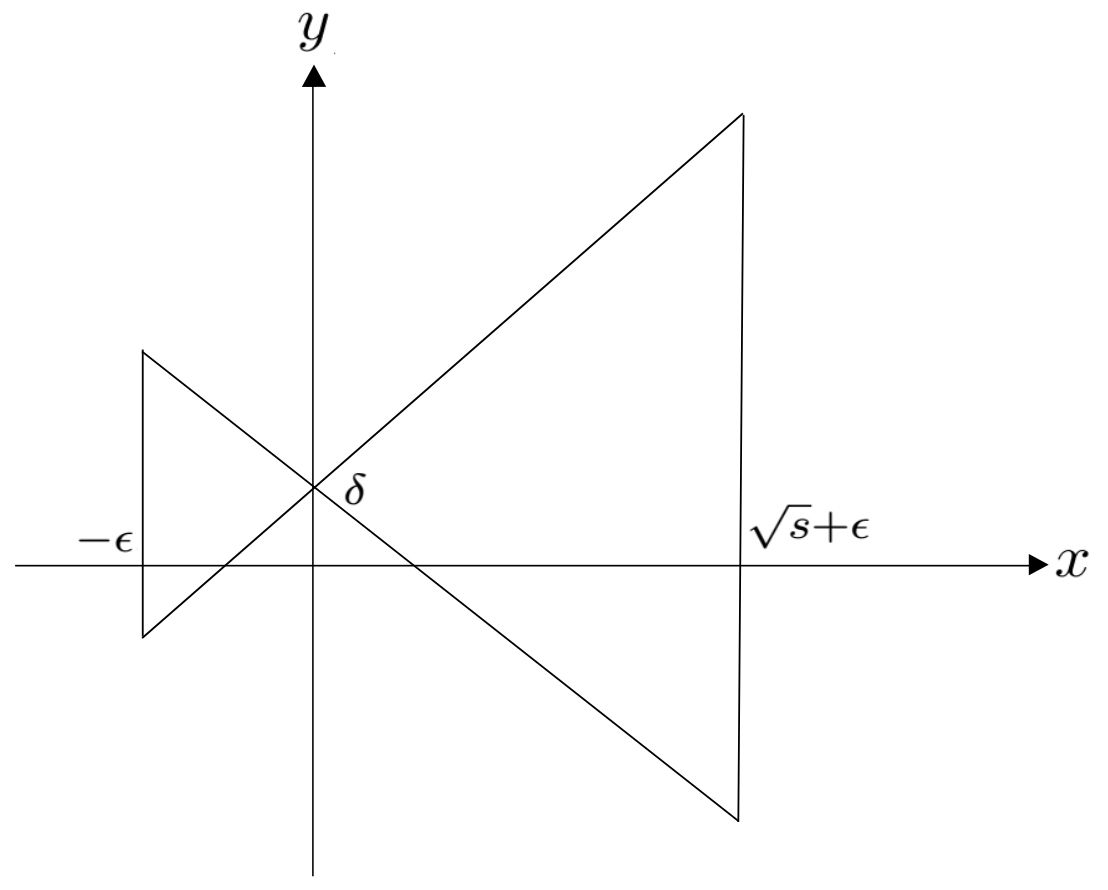} \hspace{1cm} \includegraphics[scale=0.35]{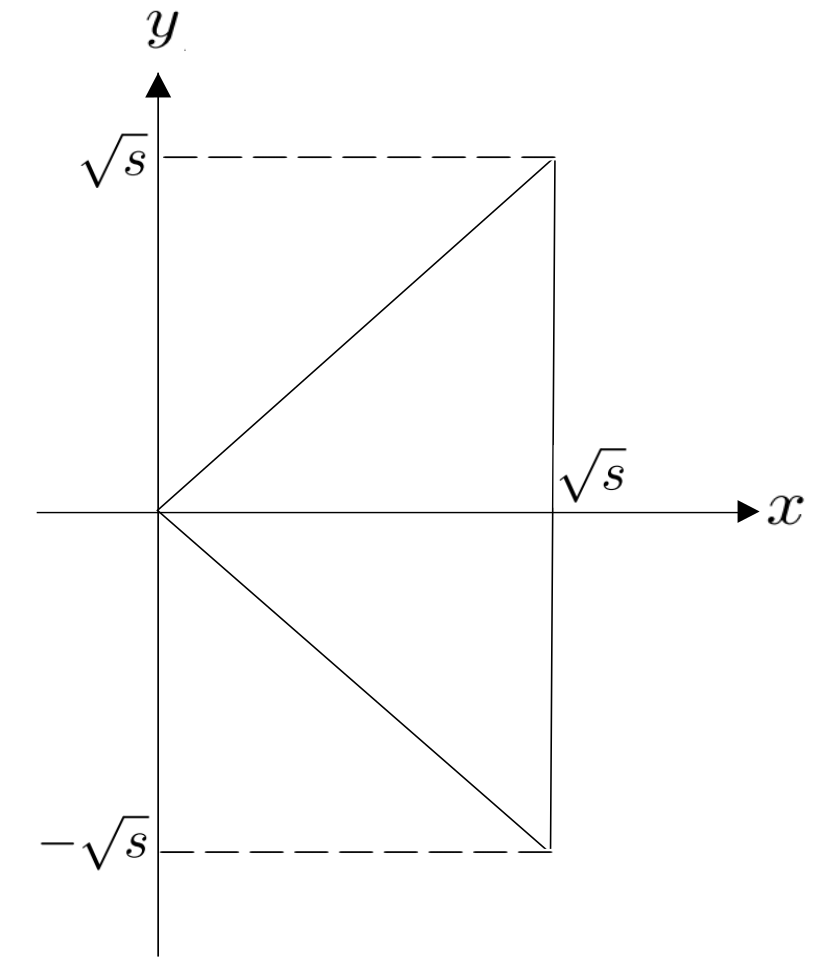}

Figure \ref{Shot1}: The boundaries of $A_{\epsilon,\delta}$ (left) and $\tau$ (right). 
\end{figure}

To see this, it is first useful to rewrite $S_N(x_0,s)$ using the constraint imposed on the summation over $m$ in (\ref{couting}) by the inner indicator function. Indeed, the constraint imposed on the inner sum is equivalent to
$$
(x_0 - m - \tfrac{1}{2N})^2 \leq n < (x_0 - m + \tfrac{1}{2N})^2,
$$
which amounts to 
$$
- \tfrac{1}{N}(x_0 - m) \leq n - (x_0 - m)^2 - (\tfrac{1}{2N})^2 < \tfrac{1}{N}(x_0 - m),
$$
giving us that 
$$
\chi_{[-\tfrac{1}{2} ,\tfrac{1}{2} )} ( N(\sqrt{n} - x_0 + m) ) = \chi_{[-1,1)} \bigg(\frac{N^{1/2}(n - (x_0 - m)^2 - (\tfrac{1}{2N})^2) }{N^{-\tfrac{1}{2}}(x_0 - m)} \bigg).
$$
Note also, $|\sqrt{n} - x_0 + m| \leq \tfrac{1}{2N}$, whenever $(m,n)$ contributes to the sum (\ref{couting}). So, the summation bound $1 \leq n \leq N'$ can be replaced by 
\begin{equation}\label{closeToRoot}
\chi_{(0,1]} \bigg( \frac{x_0-m+O(\tfrac{1}{2N})}{\sqrt{N'}} \bigg)
\end{equation}
giving us
$$
S_N(x_0,s) = \sum_{(m,n) \in \mathbb{Z}^2} \chi_{(0,1]} \bigg( \frac{x_0-m+O(\tfrac{1}{2N})}{\sqrt{N'}} \bigg)
\chi_{[-1,1)} \bigg(\frac{N^{1/2}(n - (x_0 - m)^2 - (\tfrac{1}{2N})^2) }{N^{-1/2}(x_0 - m)} \bigg)
$$
whenever $x_0 \neq 0$. The case $x_0 = 0$ can largely be ignored as the random variable $W_N$, which is uniformly distributed on the set $\Omega_N = \{k/N : 0 \leq k \leq N-1\}$, has probability $\tfrac{1}{N}$ of taking this value and we are interested in the limit as $N \to \infty$.

Therefore, the counting function can be bounded above and below using the following family of functions depending parameters $\epsilon$ and $\delta$, which can be realised as functions on $X$:
\begin{equation}\label{S_N adjust}
S_{N,\epsilon,\delta}(x_0,s) :=
\begin{cases}
\sum_{(m,n) \in \mathbb{Z}^2} \chi_{(-\epsilon,\sqrt{s}+\epsilon]} \bigg(\frac{x_0-m}{N^{1/2}} \bigg)
\chi_{[-1,1)} \bigg(\frac{N^{1/2}(n - (x_0 - m)^2) + \delta }{N^{-1/2}(x_0 - m)} \bigg) & \text{ if } x_0 \neq 0 \\
0 & \text{ if } x_0 = 0 
\end{cases}.
\end{equation}
Note that (\ref{closeToRoot}), together with the fact that $N' = \lfloor sN \rfloor$ implies we have
\begin{equation}\label{bounds on S_N}
S_{N,-\epsilon,\delta}(x_0,s) \leq S_N(x_0,s) \leq S_{N,\epsilon,\delta}(x_0,s)
\end{equation}
for $\epsilon = \epsilon_N := \tfrac{1}{2N(N')^{1/2}} + \big| \tfrac{(N')^{1/2}}{N^{1/2}} - \sqrt{s} \big|$, $\delta = \delta_N := - \tfrac{1}{4N^{3/2}}$ and $x_0 \neq 0$. As $N \to \infty$, the difference between the upper and lower bounds on $S_N(x_0,s)$ given by (\ref{bounds on S_N}) converges to zero in probability as $x_0$ runs over $\Omega_N$ according to $W_N$, as is shown in Proposition \ref{limit1}. 

The utility of introducing the functions $S_{N,\epsilon,\delta}$ is that they can be interpreted as functions of the form $f_A:X \to \mathbb{R}$ for suitable sets $A \subset \mathbb{R}^2.$

\begin{proposition}\label{triangleapprox}
\begin{equation}\label{YtoTriangle}
S_{N,\epsilon,\delta}(x_0,s) = f_{A_{\epsilon,\delta}} (\Gamma n(x_0)a(N))
\end{equation}
where 
$
A_{\epsilon,\delta} = A_{\epsilon,\delta}(s) := \{ (x,y) \in \mathbb{R}^2 : x \in (-\epsilon, \sqrt{s}+ \epsilon], \tfrac{y+\delta}{x} \in (-1,1] \}.
$
\end{proposition}
As one would expect, as $\epsilon$ and $\delta$ converge to zero, the domains $A_{\epsilon,\delta} = A_{\epsilon,\delta}(s)$ better and better approximate the triangle $\tau = \tau(s)$. This is shown in Figure \ref{Shot1}. 

\begin{proof}[Proof of Proposition \ref{triangleapprox}]
From (\ref{S_N adjust}), if we make the substitutions $(m,n) \longmapsto (-m,-n)$ and then $n \longmapsto n + m^2$ in the sum over $n$ we get 
$$
S_{N,\epsilon,\delta}(x_0,s) = \sum_{(m,n) \in \mathbb{Z}^2} \chi_{(-\epsilon,\sqrt{s}+\epsilon]} \bigg( \frac{x_0+m}{N^{1/2}} \bigg)
\chi_{[-1,1)} \bigg(\frac{N^{1/2}(n - x_0^2 + 2mx_0) + \delta }{N^{-\tfrac{1}{2}}(x_0 + m)} \bigg).
$$
To realise this as the value of a function of the space $X$ note for
$$
(M,x) := n(x_0)a(N) = 
\bigg(
\begin{pmatrix}
N^{-1/2} & 2x_0N^{1/2} \\
0 & N^{1/2}
\end{pmatrix}
, (\tfrac{x_0}{N^{1/2}},x_0^2 N^{1/2}) \bigg)
$$
we have that
$$
(m,n)M + x = \bigg(\frac{x_0+m}{N^{1/2}} , (2mx_0 + n + x_0^2)N^{1/2}\bigg).
$$
Thus,
$$
S_{N,\epsilon,\delta}(x_0,s) = f_{A_{\epsilon,\delta}} (\Gamma n(x_0)a(N))
$$
where 
$
A_{\epsilon,\delta} = A_{\epsilon,\delta}(s) := \{ (x,y) \in \mathbb{R}^2 : x \in (-\epsilon, \sqrt{s}+ \epsilon], \tfrac{y+\delta}{x} \in (-1,1] \}.
$
as required.
\end{proof}

To relate the random variables $Y_s^N$ to those defined on the space $X$ we set $\widetilde{S}_N(x_0,s) := f_{\tau(s)} (\Gamma n(x_0)a(N))$, $\widetilde{Y}_s^N := \widetilde{S}_N(W_N,s)$ and, more generally,  $Y^{N,\epsilon_N,\delta_N} := S_{N,\epsilon,\delta}(W_N,s)$. As we will now see, the limiting distribution of the variables $Y_s^N$ is identical to that of $\widetilde{Y}_s^N$. To see this, we first show:

\begin{proposition}\label{limit1}
$\mathbb{P}(Y^N_s \neq Y^{N,\epsilon_N,\delta_N}) \to 0$ as $N \to \infty$,
\begin{proof}
In light of (\ref{bounds on S_N}), it is sufficient to prove
$$
\mathbb{P}(Y^{N,-\epsilon_N,\delta_N}_s < Y^{N,\epsilon_N,\delta_N}_s) \to 0 
$$
as $N \to \infty$. Note 
\begin{align*}
S_{N,\epsilon_N,\delta_N}(x_0,s) - S_{N,-\epsilon_N,\delta_N}(x_0,s) & = 
(f_{A_{\epsilon_N,\delta_N}} - f_{A_{-\epsilon_N,\delta_N}})(\Gamma n(x_0)a(N))) \\& = f_{A_{N}} (\Gamma n(x_0)a(N))
\end{align*}
where $ A_{N} := A_{\epsilon_N,\delta_N} \setminus A_{-\epsilon_N,\delta_N}$. Now, we let the set $A'_{N}$ be the union the two rectangles $[-\epsilon_N , \epsilon_N] \times [-\eta,\eta]$ and $[1 - \epsilon_N, 1 + \epsilon_N] \times [-\eta,\eta]$, where $\eta := \sqrt{s} +1$. Then, for $N$ sufficiently large, $A_{N} \subset A'_{N}$ and $A'_{N} \searrow A_{\infty} := \{0,1\} \times [-\eta,\eta]$ which has (Lebesgue) measure zero. Therefore, whenever $n \leq N$ are sufficiently large,
$$
\mathbb{P}(Y^{N,-\epsilon_N,\delta_N}_s < Y^{N,\epsilon_N,\delta_N}_s) \leq \nu_N(f_{A'_{n}}).
$$
Each of the functions $f_{A'_{n}}$ is piecewise continuous as discontinuities of $f_{A'_{n}}$ correspond to lattices with points in the boundary of $A'_{n}$, which has (Lebesgue) measure zero. So, taking $\limsup_{N\to \infty}$ and using Theorem $\ref{main}$ we get 
$$
\limsup_{n \to \infty} \mathbb{P}(Y^{N,-\epsilon_N,\delta_N}_s < Y^{N,\epsilon_N,\delta_N}_s) \leq \int_X f_{A'_{n}} \; dm_X
$$
for all $n$ sufficiently large. Taking $n \to \infty$ and applying the dominated convergence theorem (as each set $A'_n$ is uniformly bounded and hence $f_{A'_{n}}$ is uniformly bounded by an integrable function for $n$ sufficiently large) we have $\int_X f_{A'_{n}} \; dm_X \to \int_X f_{A_{\infty}} \; dm_X$. By Lemma \ref{Integral0}, $\int_X f_{A_{\infty}} \; dm_X = 0$ which shows the required result.
\end{proof}
\end{proposition}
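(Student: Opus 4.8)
The plan is to pin the discrepancy between $Y^N_s$ and $Y^{N,\epsilon_N,\delta_N}_s$ to the event that an affine unimodular lattice meets a thin region of the plane, and then to make that event vanish in the limit by feeding a suitable \emph{bounded} test function into the equidistribution statement of Theorem~\ref{main} and bounding its integral via the Siegel-type identity of Lemma~\ref{Integral0}. First I would use the two-sided bound (\ref{bounds on S_N}): since $S_{N,-\epsilon_N,\delta_N}(x_0,s)\le S_N(x_0,s)\le S_{N,\epsilon_N,\delta_N}(x_0,s)$ for every $x_0\neq 0$, the event $\{Y^N_s\neq Y^{N,\epsilon_N,\delta_N}_s\}$ is contained in $\{W_N=0\}\cup\{Y^{N,-\epsilon_N,\delta_N}_s<Y^{N,\epsilon_N,\delta_N}_s\}$; as $\mathbb{P}(W_N=0)=1/N\to 0$, it suffices to show $\mathbb{P}\big(Y^{N,-\epsilon_N,\delta_N}_s<Y^{N,\epsilon_N,\delta_N}_s\big)\to 0$.

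Next I would rewrite the gap between the two truncated counts as a lattice-point count. Since $A_{-\epsilon,\delta}(s)\subseteq A_{\epsilon,\delta}(s)$ (only the $x$-window widens, from $(\epsilon,\sqrt{s}-\epsilon]$ to $(-\epsilon,\sqrt{s}+\epsilon]$, the slanted cut $\tfrac{y+\delta}{x}\in(-1,1]$ being untouched), formula (\ref{YtoTriangle}) identifies $S_{N,\epsilon_N,\delta_N}(x_0,s)-S_{N,-\epsilon_N,\delta_N}(x_0,s)$ with $f_{A_N}$ evaluated at the corresponding point of $X$, where $A_N:=A_{\epsilon_N,\delta_N}(s)\setminus A_{-\epsilon_N,\delta_N}(s)$ is a union of two ``wedge strips'' of width $2\epsilon_N$ straddling the lines $x=0$ and $x=\sqrt{s}$. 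Averaging over $W_N$, I then have $\mathbb{P}\big(Y^{N,-\epsilon_N,\delta_N}_s<Y^{N,\epsilon_N,\delta_N}_s\big)=\nu_N\big(\chi_{\{x\in X:\,A_N\text{ meets the lattice }x\}}\big)$.

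The essential point is that $A_N$ shrinks with $N$, whereas Theorem~\ref{main} equidistributes one fixed test function; so I would squeeze $A_N$ beneath a fixed reference set and then take $N\to\infty$ before shrinking that set. Since $N'=\lfloor sN\rfloor$ forces $\sqrt{N'/N}\to\sqrt{s}$ and $\tfrac{1}{2N\sqrt{N'}}\to 0$ we have $\epsilon_N\to 0$, and likewise $\delta_N\to 0$; hence, for each fixed $n$, once $N$ is large one has $A_N\subseteq A'_n$, where $A'_n$ is the union of the rectangle $[-\epsilon_n,\epsilon_n]\times[-\eta,\eta]$ and a congruent rectangle centred on the line $x=\sqrt{s}$, with $\eta:=\sqrt{s}+1$ (the $y$-extent of $A_N$ being at most $\sqrt{s}+\epsilon_N+|\delta_N|<\eta$ for large $N$). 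For such $N$ the indicator above is dominated pointwise by $g_n:=\min(f_{A'_n},1)$, which is bounded by $1$ and piecewise continuous, its discontinuities being the lattices with a point on $\partial A'_n$ — a $\lambda$-null, hence $m_X$-null, set by Lemma~\ref{Integral0}. Theorem~\ref{main} (with $\sigma=n$, period $1$, $M=N$) then gives $\nu_N(g_n)\to\int_X g_n\,dm_X$, so $\limsup_{N\to\infty}\mathbb{P}\big(Y^{N,-\epsilon_N,\delta_N}_s<Y^{N,\epsilon_N,\delta_N}_s\big)\le\int_X g_n\,dm_X\le\int_X f_{A'_n}\,dm_X=\lambda(A'_n)$, again by Lemma~\ref{Integral0}. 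Letting $n\to\infty$ and using $\lambda(A'_n)=8\eta\,\epsilon_n\to 0$ (equivalently, $A'_n\searrow\{0,\sqrt{s}\}\times[-\eta,\eta]$, a null set) then finishes the proof.

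I expect the one genuinely delicate point to be this interchange of limits: because $A_N$ depends on $N$ one cannot apply Theorem~\ref{main} to it directly, and the fixed intermediate sets $A'_n$ together with the ``$N\to\infty$ first, $n\to\infty$ second'' ordering are exactly what repairs this. A related, easily missed subtlety is that $f_{A'_n}$ is \emph{not} bounded on $X$ — an eccentric unimodular lattice can put arbitrarily many points in a fixed thin rectangle — so one must truncate to $\min(f_{A'_n},1)$ (or use the bare indicator) before invoking Theorem~\ref{main}. Everything else is routine, the analytic substance having already been carried by Theorem~\ref{main} and Lemma~\ref{Integral0}.
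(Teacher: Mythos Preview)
Your argument follows the same two-step strategy as the paper: bound the discrepancy by the event that the lattice meets a thin set $A_N$, enclose $A_N$ in a fixed union of rectangles $A'_n$, apply Theorem~\ref{main} with $n$ held fixed, and then let $n\to\infty$ using Lemma~\ref{Integral0}. So the route is essentially identical.

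Two points in your write-up are in fact sharper than the paper's version. First, you correctly centre the second rectangle at $x=\sqrt{s}$; the paper's choice of $[1-\epsilon_N,1+\epsilon_N]$ appears to be a slip (the $x$-window of $A_{\epsilon,\delta}(s)$ ends at $\sqrt{s}+\epsilon$, not $1+\epsilon$). Second, and more substantively, you observe that $f_{A'_n}$ is \emph{unbounded} on $X$ and therefore truncate to $g_n=\min(f_{A'_n},1)$ before invoking Theorem~\ref{main}, which is stated only for bounded piecewise continuous functions. The paper feeds $f_{A'_n}$ directly into Theorem~\ref{main}; your truncation closes that gap at no cost, since the indicator $\chi_{\{f_{A_N}\ge 1\}}$ is already $\le 1$.
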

The above then allows us to prove the following.
\begin{proposition}\label{limit2}
$\mathbb{P}(Y^N_s \neq \widetilde{Y}_s^N) \to 0$ as $N \to \infty$
\begin{proof}
This goes along similar lines to the proof of Proposition \ref{limit1}. First note that, by this result, it is sufficient to show
$$
\mathbb{P}(Y^{N,\epsilon_N,\delta_N}_s \neq \widetilde{Y}_s^N ) \to 0
$$
as $N \to \infty$. Points in $\Omega_N$ where these random variables differ correspond to lattices with points in exactly one of the sets $\tau$ or $A_{\epsilon_N,\delta_N}$. Hence, $
\mathbb{P}(Y^{N,\epsilon_N,\delta_N}_s \neq \widetilde{Y}_s^N ) \leq \nu_N(f_{\tau \Delta A_{\epsilon_N, \delta_N} }).$
We can also find a sequence $\{\tau_N\}_{N=1}^{\infty}$ of regions in $\mathbb{R}^2$, each consisting of a triangular region with a smaller triangular region removed from its interior, such that $\tau \Delta A_{\epsilon_N, \delta_N, L} \subset \tau_N$ for all $N$ and $\tau_N \searrow W$, where $W$ has (Lebesgue) measure $0$. By taking limsups and using Theorem \ref{main}, we get 
$$ \limsup_{N \to \infty} \mathbb{P}(Y^{N,\epsilon_N,\delta_N}_s \neq \widetilde{Y}_s^N )  \leq \int_X f_{\tau_n} \; dm_X $$ for all $n$. Taking $n \to\infty$, using the dominated convergence theorem and Lemma \ref{Integral0} again gives the result. 

\end{proof}
\end{proposition}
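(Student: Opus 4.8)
The plan is to run the argument used for Proposition~\ref{limit1}, with the straight triangle $T(s)$ now playing the role that the limiting set $A_\infty$ played there, and to dispose of $Y^N_s$ itself at the outset. Since
$$\mathbb{P}(Y^N_s \neq \tilde{Y}^N_s) \;\le\; \mathbb{P}(Y^N_s \neq Y^{N,\epsilon_N,\delta_N}_s) + \mathbb{P}(Y^{N,\epsilon_N,\delta_N}_s \neq \tilde{Y}^N_s),$$
and the first term tends to $0$ by Proposition~\ref{limit1}, it suffices to show $\mathbb{P}(Y^{N,\epsilon_N,\delta_N}_s \neq \tilde{Y}^N_s)\to 0$. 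By (\ref{YtoTriangle}) and the definition of $\tilde{S}_N$, the quantities $Y^{N,\epsilon_N,\delta_N}_s$ and $\tilde{Y}^N_s$ are the values, at one and the same random point of $X$ determined by $W_N$, of the lattice-point counting functions $f_{A_{\epsilon_N,\delta_N}}$ and $f_{T(s)}$. Hence they disagree exactly when that affine unimodular lattice has a point in the symmetric difference $T(s)\,\Delta\,A_{\epsilon_N,\delta_N}$, and Markov's inequality gives $\mathbb{P}(Y^{N,\epsilon_N,\delta_N}_s \neq \tilde{Y}^N_s) \le \nu_N(f_{T(s)\,\Delta\,A_{\epsilon_N,\delta_N}})$.

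The obstruction is that the region $T(s)\,\Delta\,A_{\epsilon_N,\delta_N}$ varies with $N$, so Theorem~\ref{main} — stated for a fixed test function — does not apply directly. To circumvent this I would construct, for each $n\in\mathbb{N}$, a fixed bounded region $T_n\subset\mathbb{R}^2$ of the shape ``a triangle with a smaller open triangle deleted'' — concretely, fatten each of the three edges of $T(s)$ outward and shrink them inward by an amount $\asymp\tfrac1n$ — arranged so that (a) $T(s)\,\Delta\,A_{\epsilon_N,\delta_N}\subseteq T_n$ for all sufficiently large $N$ (possible since $\epsilon_N,\delta_N\to0$), and (b) $T_n\searrow W$ with $W$ contained in $\partial T(s)$, a union of three segments, so $\lambda(W)=0$. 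Then $\nu_N(f_{T(s)\,\Delta\,A_{\epsilon_N,\delta_N}})\le\nu_N(f_{T_n})$ for all large $N$, and since $f_{T_n}$ is bounded (as $T_n$ is bounded) and piecewise continuous — its discontinuities correspond to lattices meeting $\partial T_n$, an $m_X$-null set by Lemma~\ref{Integral0} — Theorem~\ref{main} yields $\limsup_{N\to\infty}\nu_N(f_{T_n})=\int_X f_{T_n}\,dm_X$. Hence $\limsup_{N\to\infty}\mathbb{P}(Y^{N,\epsilon_N,\delta_N}_s\neq\tilde{Y}^N_s)\le\int_X f_{T_n}\,dm_X$ for every $n$.

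Finally I would let $n\to\infty$: since $T_n\searrow W$, for each affine lattice $L$ the integers $|T_n\cap L|$ decrease to $|W\cap L|$, so $f_{T_n}\downarrow f_W$ pointwise, dominated by $f_{T_1}\in L^1(X)$ by Lemma~\ref{Integral0}; dominated convergence then gives $\int_X f_{T_n}\,dm_X\to\int_X f_W\,dm_X=\lambda(W)=0$, again by Lemma~\ref{Integral0}. With the reduction of the first paragraph this forces $\mathbb{P}(Y^N_s\neq\tilde{Y}^N_s)\to0$, completing the proof. The one genuinely non-soft step is the construction of the family $\{T_n\}$ and the verification of the two containments in the second paragraph; this is an elementary but slightly delicate planar computation, the delicate point being the apex $x=0$, where $A_{\epsilon,\delta}$ acquires a thin sliver on the $x<0$ side and the edges $y+\delta=\pm x$ are tilted relative to the edges $y=\pm x$ of $T(s)$. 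Everything else is the same soft machinery — Markov's inequality, Theorem~\ref{main}, Lemma~\ref{Integral0}, dominated convergence — already deployed in Proposition~\ref{limit1}.
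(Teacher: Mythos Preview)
Your proposal is correct and follows essentially the same route as the paper's proof: reduce via Proposition~\ref{limit1}, bound by $\nu_N(f_{T\Delta A_{\epsilon_N,\delta_N}})$, trap the varying symmetric difference inside a fixed ``annular'' triangle $T_n$, apply Theorem~\ref{main}, and then let $n\to\infty$ via dominated convergence and Lemma~\ref{Integral0}. Your write-up is in fact more careful than the paper's in justifying piecewise continuity of $f_{T_n}$ and the domination; the only imprecision is the word ``exactly'' in your second paragraph (disagreement implies a lattice point in the symmetric difference, but not conversely), though this does not affect the inequality you need.
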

Using these two propositions, we are now in a position to prove Theorems \ref{secondPigeon} and \ref{mainPigeon} using Theorem \ref{main}.

\begin{proof}[Proof of Theorem \ref{secondPigeon}]
Let $j \in \mathbb{N}_0$ and $s > 0$. 
By Proposition \ref{limit2}, it is enough to show 
$$ 
\lim_{N \to \infty} \mathbb{P}(\widetilde{Y}_s^N = j)
$$
exists. We have $\mathbb{P}(\widetilde{Y}^N_s = j) = \frac{1}{N} \sum_{k=0}^{N-1} \chi_{\{x_0: \widetilde{S_{N}}(x_0) = j\}}(\tfrac{k}{N}) = \nu_N(f_{j,s})$ where 
$f_{j,s} := \chi_{\{f_\tau(s) = j\}}$. Now, note that the points of discontinuity of $\chi_{\{f_{\tau(s)} = j\}}$ correspond to lattices with points in the boundary of the set $\tau(s)$, $\partial \tau(s)$. Namely, if $\Gamma(M, x)$ is a discontinuity point of $f_{j,s}$ then the lattice $ \{mM + x \}_{m \in \mathbb{Z}^2}$ contains a point in $\partial \tau$. But then $f_{\partial \tau(s)}(\Gamma(M, x)) \geq 1$. By Markov's inequality and Lemma \ref{Integral0}, the set of all such discontinuity points is contained in a set of measure zero, namely the set $\{f_{\partial \tau(s)} \geq 1 \}$. So we can apply Theorem \ref{main} which gives us that 
$$ \lim_{n \to \infty} 
\mathbb{P}(\widetilde{Y}^N_s = j) = \int f_{j,s} \; dm_X .$$
We hence see  the limiting distribution $E_j(s)$ of the quantities $E_{N,j}(s)$ is given by
\begin{equation}\label{LimitEquation}
E_j(s) = m_X( \{\Gamma(M,x) \in X: |(\mathbb{Z}^2M + x)\cap \tau(s)| = j \} ).
\end{equation}
\cite[Proposition 8.13]{marklof2010distribution} immediately tells us this function is $C^2$.
\end{proof}

\begin{proof}[Proof of Theorem \ref{mainPigeon}]
Let $Y_s:X \to \mathbb{R}$ be given by $Y_s = f_{\tau(s)}$ and let $\xi$ be the associated point process. Given a point $\Gamma (M,x) \in X$, this point process takes the form 
$$
\xi(\Gamma (M,x)) = \sum_{j=1}^{\infty} \delta_{s_j}
$$
where $s_j = \inf\{s>0 : Y_s(\Gamma (M,x)) \geq j\}$. In this setting we have, analogously to (\ref{processIntervalRelation2}), that $\xi((a,b]) = Y_b - Y_a$.   This agrees with with (\ref{detlaSum}) due to the definition of $\tau(s)$.  If $s_j = s_{j+1}$ for some $j$, it must be the case that the lattice defined by $(M,x)$ contains multiple points on the boundary of the triangle $\tau(s_j)$. Now, for any $s >0$, the boundary of the triangle $\tau(s)$ is contained within the lines $y=x$, $y = -x$ and $x = \sqrt{s}$. So, if the lattice defined by $(M,x) \in X$ does intersect the boundary of the triangle $\tau(s)$ in a set of size at least 2 for some $s$, then either 
\begin{itemize}
\item the lattice contains a point in the line $y = x$.
\item the lattice contains a point in the line $y = -x$.
\item the lattice contains multiple points in the line $x = \sqrt{s}$ for some $s > 0$.
\end{itemize}
By Lemma \ref{Integral0}, the measure of the set of all points $\Gamma (M,x) \in X$ whose corresponding lattice intersects the lines $y=x$ or $y=-x$ is zero. Moreover, in the case where we have multiple lattice points on the line $x = \sqrt{s}$ for some $s >0$, we can find $(u,v) \in \mathbb{Z}^2$ such that $(u,v)M$ has first coordinate equal to $0$. So, for $(u,v) \in \mathbb{Z}^2$, define 
$$G_{u,v} := \{ (M,x) \in G: (u,v)M = (0,y) \text{ for some } y \in \mathbb{R}\}.$$
For all $(u,v)$, $G_{u,v}$ is a codimension one submanifold of $G$ and so $m_X(G_{u,v}) =0$. Therefore, the set of all lattices with multiple points on one of the vertical lines $x = \sqrt{s}$ has measure zero. Consequently, the points $(s_j)_{j=1}^{\infty}$ are almost surely distinct and so the process $\xi$ is simple. 

To verify condition (i) in Lemma \ref{processConvergence} holds, take an interval $(a,b] \subset \mathbb{R}^+$. Then, $\mathbb{E}[\xi_N((a,b])] = \mathbb{E}[ Y^N_b - Y^N_a] \to b-a $ as $N \to \infty$ since, for any $s > 0$, the interval $[x_0 - \tfrac{1}{2N}, x_0 + \tfrac{1}{2N}) + \mathbb{Z} $ contains, on average, $s + O(\tfrac{1}{N})$ points from the sequence 
$(\sqrt{n})_{n=1}^{\lfloor sN \rfloor}$ as $x_0$ varies across $\Omega_N$. By Lemma \ref{Integral0}, $m_X( \xi((a,b])) = m_X(Y_b - Y_a) = b-a$.

To verify condition (ii) holds, let $k \in \mathbb{N}$ and $a_1 < b_1 \leq a_2 < b_2 \leq \dotsc \leq a_k < b_k$ be non-negative real numbers. Set $V = \cup_{j=1}^k (a_j,b_j]$. 
\begin{equation}\label{Eq1}
\mathbb{P}(\xi_N(V) = 0) = \mathbb{P}( \cap_{j=1}^k \{Y^N_{b_j} = Y^N_{a_j} \} )
\end{equation}
Now let $V_N := \cap_{j=1}^k (\{Y^N_{b_j} = \widetilde{Y}^N_{b_j} \} \cap \{Y^N_{a_j} = \widetilde{Y}^N_{a_j} \})$ 
By Proposition \ref{limit2}, $\mathbb{P}(V_N) \to 1$ as $N \to \infty$. Therefore, 
\begin{align}
&\limsup_{N \to \infty} |\mathbb{P}( \cap_{j=1}^k \{Y^N_{b_j} = Y^N_{a_j} \} ) - \mathbb{P}( \cap_{j=1}^k \{\widetilde{Y}^N_{b_j} = \widetilde{Y}^N_{a_j} \} ) |    \\
& \leq  \limsup_{N \to \infty} \; \mathbb{P}( \cap_{j=1}^k \{Y^N_{b_j} = Y^N_{a_j} \} \Delta \cap_{j=1}^k \{\widetilde{Y}^N_{b_j} = \widetilde{Y}^N_{a_j} \} )\\
& \leq \limsup_{N \to \infty} \mathbb{P}(B_N^c) = 0 \label{Eq2}
\end{align}
Moreover,
\begin{equation}\label{Eq3}
\mathbb{P}( \cap_{j=1}^k \{\widetilde{Y}^N_{b_j} = \widetilde{Y}^N_{a_j} \} ) = \nu_N(\chi_{\{f_D = 0\}} )
\end{equation}
where $D = D(a_1,b_1;a_2,b_2;\dotsc ;a_k,b_k)$ is the set $\cup_{j=1}^k (\tau(b_j) \setminus \tau(a_j))$. Note that the function 
$\chi_{\{f_D = 0\}}$ has discontinuities at points in $X$ whose corresponding lattice contains a point in the boundary of $D$. Since the boundary of $D$ is union of the boundaries of the triangles $\tau(a_j)$ and $\tau(b_j)$, it has measure zero. Thus, we can apply Theorem \ref{main} and deduce that 
\begin{equation}\label{Eq4}
\nu_N(\chi_{\{f_D = 0\}} ) \to m_X(f_D = 0)
\end{equation}
as $N \to \infty$. 
Finally, given 
\begin{equation}\label{Eq5}
m_X(\xi(V) = 0) = m_X(\cap_{j=1}^k \{f_{\tau(a_j)} = f_{\tau(b_j)} \}) = m_X(f_D = 0)
\end{equation}
we get 
$$
\lim_{N \to \infty} \mathbb{P}(\xi_N(V) = 0) = m_X(\xi(V) = 0)
$$
by combining (\ref{Eq1}), (\ref{Eq2}), (\ref{Eq3}), (\ref{Eq4}) and (\ref{Eq5}), completing the proof.
\end{proof}

The proof of Corollary \ref{SecondMoment} relies on the following consequence of the Siegel integral formula.

\begin{lemma}[{{\cite[(3.7)]{el2015distribution}}}]\label{siegel}
Let $F_1,F_2 \in L^1(\mathbb{R}^2)$. Then
$$
\int_X \sum_{m_1 \neq m_2 \in \mathbb{Z}^2} F_1((m_1M+x)F_2((m_2M+x)) dm_X(M,x) = \int_{\mathbb{R}^2}F_1 dx \int_{\mathbb{R}^2}F_2 dx.
$$
\end{lemma}
We will also use non-escape of results proved by El-Baz, Marklof and Vinogradov in \cite{el2015two}, which is the content of (\ref{LimLimsup}) below.

\begin{proof}[Proof of Corollary \ref{SecondMoment}]
Expanding the formula for $|Y_s(M,x)|^2$ we see that
\begin{align*}
    \int_X |Y_s(M,x)|^2 d m_X(M,x) =&  \int_X \sum_{m_1 \neq m_2 \in \mathbb{Z}^2} \chi_{\tau(s)}(m_1M+x)\chi_{\tau(s)}(m_2M+x)d m_X(M,x) \\
    + & \sum_{m \in \mathbb{Z}^2} \chi_{\tau(s)}(mM+x) d m_X(M,x)
\end{align*}
This equals $s^2 +s$ by Lemma \ref{siegel} and Lemma \ref{Integral0}.

As in \cite{el2015two}, we define
$$ \mathcal{P}_N := \{ \sqrt{n} + \mathbb{Z}: 1 \leq n \leq N \text{ and } n \text{ is not a square} \}. $$
Also, for an interval $I \subset \mathbb{R}$ we define the function $Z_N(I,\cdot): [0,1) \to \mathbb{R} $ by setting 
$$
Z_N(I,\alpha) := | (|\mathcal{P}_N|^{-1}I + \alpha + \mathbb{Z}) \cap \mathcal{P}_N |.
$$
This gives the number of points of $\mathcal{P}_N$ in the interval $I$ when normalized and shifted by $\alpha$. Now, equation (2.5) in \cite{el2015two} tells us
\begin{equation}\label{LimLimsup}
\lim_{R \to \infty} \limsup_{N \to \infty} \int_{\{Z_N(I,\cdot) > R\}} Z_N(I,\alpha) \; d\alpha = 0.
\end{equation}
Fix $s >0$. For any $N \in \mathbb{N}$, one can see from the inequality $\sqrt{t+1} - \sqrt{t} \geq \tfrac{1}{2\sqrt{t+1}} $ that all points $\mathcal{P}_{sN}$ lie a distance at least $\tfrac{1}{2\sqrt{Ns+1}}$ away from $0 \in \mathbb{T}$. As a consequence, when $N$ is sufficiently large, the only points of the sequence $\{ \sqrt{n} + \mathbb{Z}: 1 \leq n \leq sN \}$ which lie in the interval of width $\tfrac{1}{N}$ centred at $0$ are themselves $0$ and correspond to squares less than $sN$. Therefore, when $N$ is sufficiently large we have $S_N(0,s) = \lfloor \sqrt{sN} \rfloor $ and, if we define 
$ \widehat{E}_{j,N}(s) $ to be to be the proportion of the intervals $\{[x_0 - \tfrac{1}{2N} , x_0 + \tfrac{1}{2N}) + \mathbb{Z} : x_0 \in \Omega_N \}$ containing $j$ points of $\mathcal{P}_{sN}$, 
\begin{equation}\label{EvsEhat}
|E_{j,N}(s) - \widehat{E}_{j,N}(s)| \leq \frac{1}{N}.
\end{equation}
Now, for a large natural number $R$ we have that 
\begin{align*}
& \Big|\mathbb{E}[(Y^N_s)^2] - \int Y_s^2 \; dm_X -s \Big| \\
 \leq & \bigg| \frac{(S_N(0,s))^2}{N} - s \bigg| + \bigg| \frac{1}{N}\sum_{x_0 \in \Omega_N \setminus \{0\}} S_N(x_0,s)^2 - \int Y_s^2 \; dm_X \bigg| \\
 \leq & \bigg| \frac{(S_N(0,s))^2}{N} - s \bigg| + \bigg| \sum_{j=0}^R j^2 \widehat{E}_{j,N}(s) - \sum_{j=0}^{\infty} j^2 E_j(s) \bigg|
+ \sum_{j= R+1}^{\infty} j^2 \widehat{E}_{j,N}(s) .
\end{align*}
The first term above here clearly tends to $0$ as $N \to \infty$ whilst the second tends to $\sum_{j=R+1}^{\infty} j^2 E_j(s)$ as a consequence of Theorem \ref{secondPigeon} and (\ref{EvsEhat}). So, to complete the proof we need to show 
\begin{equation}\label{LimLimsup2}
\lim_{R \to \infty} \limsup_{N \to \infty} \sum_{j= R+1}^{\infty} j^2 \widehat{E}_{j,N}(s) = 0.
\end{equation}
Firstly, note 
\begin{align*}
\sum_{j = R+1}^{\infty} j^2 \widehat{E}_{j,N}(s) =& \frac{1}{N} \sum_{x_0 \in \Omega_N \setminus \{0\}} S_N(x_0,s)^2 \chi_{\{S_N(\cdot, s) \geq R+1\}}(x_0) \\
\leq & \frac{1}{N}\sum_{x_0 \in \Omega_N} |Z_{sN}([-\tfrac{s}{2},\tfrac{s}{2}),x_0)|^2 \chi_{\{Z_{sN}([-\tfrac{s}{2},\tfrac{s}{2}), \cdot ) \geq R +1 \}}(\tfrac{k}{N})
\end{align*}
since the shifted intervals $ |\mathcal{P}_{sN}|^{-1}[-\tfrac{s}{2}, \tfrac{s}{2}) + x_0 + \mathbb{Z}$ contain 
$[x_0 - \tfrac{1}{2N} , x_0 + \tfrac{1}{2N}) + \mathbb{Z}$. 

Secondly, for any $\alpha \in [x_0 - \tfrac{1}{2N} , x_0 + \tfrac{1}{2N})$, as the interval $ |\mathcal{P}_{sN}|^{-1}[-s, s) + \alpha + \mathbb{Z}$ contains $ |\mathcal{P}_{sN}|^{-1}[-\tfrac{s}{2}, \tfrac{s}{2}) + x_0 + \mathbb{Z}$, we have  $Z_{sN}([-\tfrac{s}{2},\tfrac{s}{2}),x_0) \leq Z_{sN}([-s,s),\alpha)$ for any such $\alpha$. Thus we get 
\begin{align*}
& \frac{1}{N}\sum_{x_0 \in \Omega_N} |Z_{sN}([-\tfrac{s}{2},\tfrac{s}{2}),x_0)|^2 \chi_{\{Z_{sN}([-\tfrac{s}{2},\tfrac{s}{2}), \cdot ) \geq R +1 \}}(x_0) \\
 \leq &\frac{1}{N}\sum_{x_0 \in \Omega_N} N \int_{x_0 - \tfrac{1}{2N}}^{x_0 + \tfrac{1}{2N}} |Z_{sN}([-s,s),\alpha)|^2 \chi_{\{Z_{sN}([-s,s), \cdot ) \geq R +1 \}}(\alpha) \; d\alpha \\
 = & \int_{\{Z_{sN}([-s,s), \cdot ) > R \}} |Z_{sN}([-s,s),\alpha)|^2 \; d\alpha
\end{align*}
Taking $N \to \infty$ and applying (\ref{LimLimsup}) gives us (\ref{LimLimsup2}) and hence the result.
\end{proof}

\section{Properties of the limiting process}\label{Proporties}
As we noted in \textsection \ref{intro}, $\xi$ is a simple intensity 1 process which does not have independent increments. The simplicity of this process was shown in the proof of Theorem \ref{mainPigeon}. The fact it has intensity 1 follows from Lemma \ref{Integral0} since for any interval $(a,b] \subset [0,\infty)$ we have that
$$
m_X[\xi((a,b])] = m_X(Y_b - Y_a) = b - a .
$$
To see that $\xi$ doesn't have independent increments consider the intervals $A := [0,2)$, $B := [2,\sqrt{5})$ and $C := [\sqrt{5},3)$. Then 
$$ m_X(\xi(A\cup C) \geq 1 \; | \; \xi(B) = 1) = 1$$
but 
\begin{equation}\label{NoLatticePoints}
m_X(\xi(A \cup C)) < 1.
\end{equation}
\begin{figure}[h]\label{Shot3}
\includegraphics[scale=0.45]{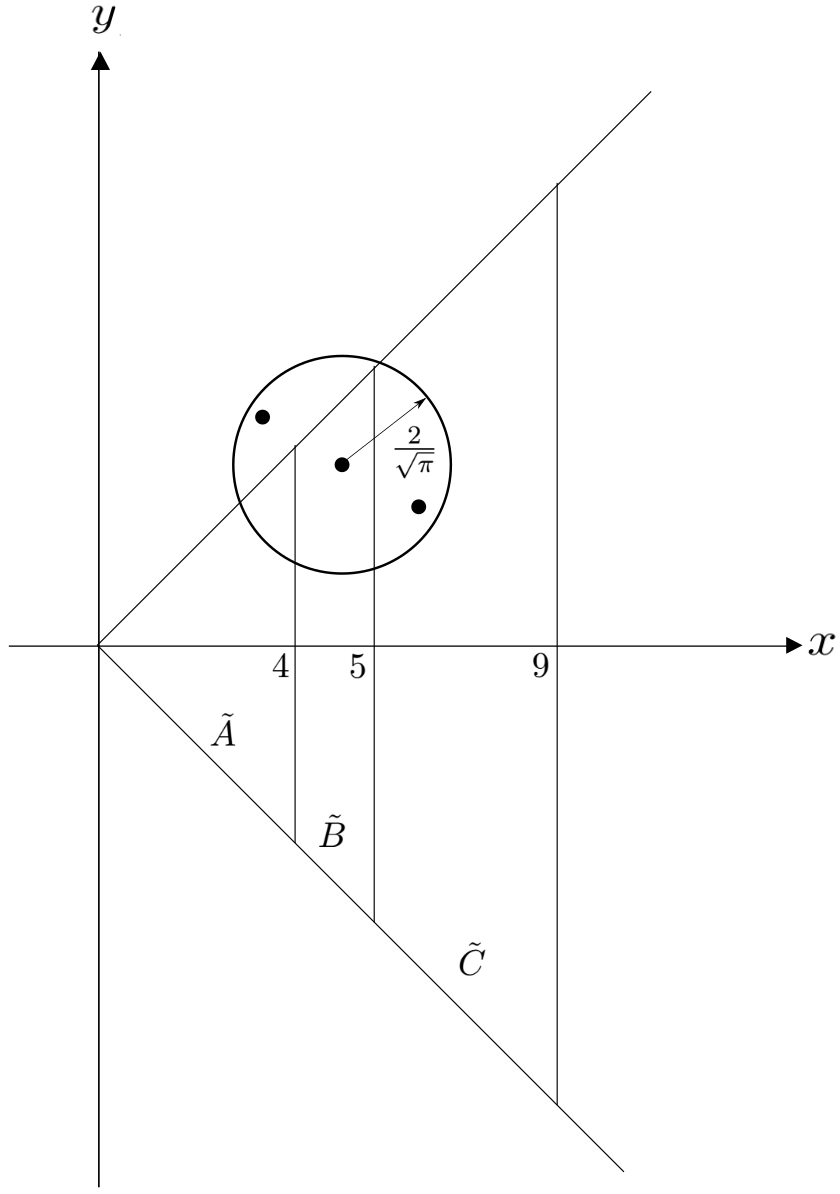} \hspace{1cm}\includegraphics[scale=0.55]{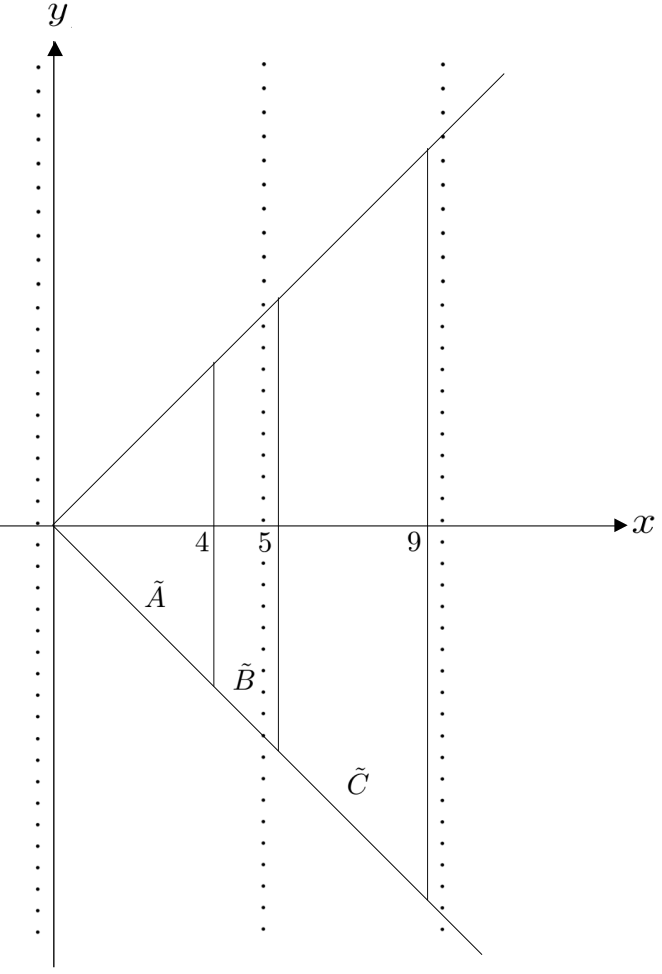}
Figure \ref{Shot3}: Any lattice containing a single point in $\widetilde{B}$ will contain one in either $\widetilde{A}$ or $\widetilde{C}$ (left). An example of a lattice with no points in $\widetilde{A} \cup \widetilde{C}$ (right). 
\end{figure}
This follows from the fact that, if there is exactly one point of the lattice given by $x \in X$ in the set $\widetilde{B} = 
\{ (u,v) \in \tau(\infty) | 4 \leq u < 5 \} $ then, by Minkowski's theorem, there is another lattice point in $\tau(\infty)$ which lies a distance at most a distance $\tfrac{2}{\sqrt{\pi}}$ away. Given $\xi(x)(B) =1$, this point must lie in either the set $ \widetilde{A} = \{ (u,v) \in \tau(\infty) | 0 \leq u < 4 \} $  or $\widetilde{C} = \{ (u,v) \in \tau(\infty) | 5 \leq u < 9 \} $ giving us $\xi(x)(A \cup B) \geq 1$. Conversely, it can easily been seen via analysing the form of the Haar measure on $X$ that a positive proportion of our affine unimodular lattices contain no points in $\widetilde{A} \cup \widetilde{B}$. An example of such a lattice is shown in figure \ref{Shot3}.

In terms of understanding the distribution of the points $\{\sqrt{n} + \mathbb{Z}: 1 \leq n \leq sN \}$ among our partition intervals, the lack of independent increments in the limiting point process tells us, when $(a,b] \cap (c,d] =\emptyset$, the points $ \{\sqrt{n} + \mathbb{Z}: aN < n \leq bN \}$ and $\{\sqrt{n} + \mathbb{Z}: cN < n \leq dN \}$ don't distribute among the partition intervals independently in the limit as $N \to \infty$. For example, when $S_N(x_0,2)$ is large, then, on average, $S_N(x_0,\sqrt{5}) - S_N(x_0,2)$ will be also. This is made intuitively clear by the fact that, if $\widetilde{A}$ contains many lattice points, then we would also expect $\widetilde{B}$ to do so also.

\section{Acknowledgements}
The author would like to thank Jens Marklof for his helpful guidance throughout the writing of this paper and the Heilbronn Institute for Mathematical Research for their support. Thanks should also be given to the anonymous referee for their comments and suggestions on the original version of this paper.

\printbibliography

@article {burrin2020translates,
    AUTHOR = {Burrin, Claire and Shapira, Uri and Yu, Shucheng},
     TITLE = {Translates of rational points along expanding closed
              horocycles on the modular surface},
   JOURNAL = {Math. Ann.},
  FJOURNAL = {Mathematische Annalen},
    VOLUME = {382},
      YEAR = {2022},
    NUMBER = {1-2},
     PAGES = {655--717},
      ISSN = {0025-5831},
   MRCLASS = {37A17 (11F72 30F35)},
  MRNUMBER = {4377314},
       DOI = {10.1007/s00208-021-02267-7},
       URL = {https://doi.org/10.1007/s00208-021-02267-7},
}

@article {einsiedler2019primitive,
    AUTHOR = {Einsiedler, Manfred and Luethi, Manuel and Shah, Nimish A.},
     TITLE = {Primitive rational points on expanding horocycles in products
              of the modular surface with the torus},
   JOURNAL = {Ergodic Theory Dynam. Systems},
  FJOURNAL = {Ergodic Theory and Dynamical Systems},
    VOLUME = {41},
      YEAR = {2021},
    NUMBER = {6},
     PAGES = {1706--1750},
      ISSN = {0143-3857},
   MRCLASS = {37A44 (11F06 11L05 37A17 37D40)},
  MRNUMBER = {4252207},
MRREVIEWER = {Cheng Zheng},
       DOI = {10.1017/etds.2020.15},
       URL = {https://doi.org/10.1017/etds.2020.15},
}

@article {marklof2003equidistribution,
    AUTHOR = {Marklof, J. and Str\"{o}mbergsson, A.},
     TITLE = {Equidistribution of {K}ronecker sequences along closed
              horocycles},
   JOURNAL = {Geom. Funct. Anal.},
  FJOURNAL = {Geometric and Functional Analysis},
    VOLUME = {13},
      YEAR = {2003},
    NUMBER = {6},
     PAGES = {1239--1280},
      ISSN = {1016-443X},
   MRCLASS = {37A45 (11K06 11K41 37D40)},
  MRNUMBER = {2033838},
MRREVIEWER = {Dmitry Y. Kleinbock},
       DOI = {10.1007/s00039-003-0445-4},
       URL = {https://doi.org/10.1007/s00039-003-0445-4},
}

@article{technau2020correlations,
	author = {Technau, Niclas and Yesha, Nadav},
	date-added = {2021-03-22 23:09:03 +0000},
	date-modified = {2021-03-31 17:51:32 +0100},
	journal = {arXiv preprint arXiv:2006.16629},
	title = {On the correlations of $ n^{\alpha} $ mod 1},
	year = {2020}}

@book {feller1957introduction,
    AUTHOR = {Feller, William},
     TITLE = {An introduction to probability theory and its applications.
              {V}ol. {I}},
      NOTE = {2nd ed},
 PUBLISHER = {John Wiley \& Sons, Inc., New York; Chapman \& Hall, Ltd.,
              London},
      YEAR = {1957},
     PAGES = {xv+461},
   MRCLASS = {60.0X},
  MRNUMBER = {0088081},
MRREVIEWER = {U. Grenander},
}

@article {el2015distribution,
    AUTHOR = {El-Baz, Daniel and Marklof, Jens and Vinogradov, Ilya},
     TITLE = {The distribution of directions in an affine lattice: two-point
              correlations and mixed moments},
   JOURNAL = {Int. Math. Res. Not. IMRN},
  FJOURNAL = {International Mathematics Research Notices. IMRN},
      YEAR = {2015},
    NUMBER = {5},
     PAGES = {1371--1400},
      ISSN = {1073-7928},
   MRCLASS = {60G55 (11P21)},
  MRNUMBER = {3340360},
MRREVIEWER = {Jean-Ren\'{e} Chazottes},
       DOI = {10.1093/imrn/rnt258},
       URL = {https://doi.org/10.1093/imrn/rnt258},
}

@article {el2015two,
    AUTHOR = {El-Baz, Daniel and Marklof, Jens and Vinogradov, Ilya},
     TITLE = {The two-point correlation function of the fractional parts of
              {$\sqrt{n}$} is {P}oisson},
   JOURNAL = {Proc. Amer. Math. Soc.},
  FJOURNAL = {Proceedings of the American Mathematical Society},
    VOLUME = {143},
      YEAR = {2015},
    NUMBER = {7},
     PAGES = {2815--2828},
      ISSN = {0002-9939},
   MRCLASS = {11J71 (11K36 22E40 37A17 37A25)},
  MRNUMBER = {3336607},
MRREVIEWER = {Siming Tu},
       DOI = {10.1090/S0002-9939-2015-12489-6},
       URL = {https://doi.org/10.1090/S0002-9939-2015-12489-6},
}

@book {leadbetter1983extremes,
    AUTHOR = {Leadbetter, M. R. and Lindgren, Georg and Rootz\'{e}n, Holger},
     TITLE = {Extremes and related properties of random sequences and
              processes},
    SERIES = {Springer Series in Statistics},
 PUBLISHER = {Springer-Verlag, New York-Berlin},
      YEAR = {1983},
     PAGES = {xii+336},
      ISBN = {0-387-90731-9},
   MRCLASS = {60F05 (60G15 62G30)},
  MRNUMBER = {691492},
MRREVIEWER = {Simeon M. Berman},
}

@online{Weiss,
	author = {Benjamin Weiss},
	year = {2020},
	date-added = {2020-10-16 17:14:59 +0100},
	date-modified = {2021-04-22 12:16:57 +0100},
	title = {Weiss, Benjamin (2020). Poisson-generic points.CIRM. Audiovisual resource. doi:10.24350/CIRM.V.19690103},
	url = {http://dx.doi.org/10.24350/CIRM.V.19690103},
	Bdsk-Url-1 = {https://video.ias.edu/pseudo2010/weiss}}

@article {marklof2010distribution,
    AUTHOR = {Marklof, Jens and Str\"{o}mbergsson, Andreas},
     TITLE = {The distribution of free path lengths in the periodic
              {L}orentz gas and related lattice point problems},
   JOURNAL = {Ann. of Math. (2)},
  FJOURNAL = {Annals of Mathematics. Second Series},
    VOLUME = {172},
      YEAR = {2010},
    NUMBER = {3},
     PAGES = {1949--2033},
      ISSN = {0003-486X},
   MRCLASS = {37D50 (37A17 37A60 60F17 82C41)},
  MRNUMBER = {2726104},
MRREVIEWER = {Nikolai Chernov},
       DOI = {10.4007/annals.2010.172.1949},
       URL = {https://doi.org/10.4007/annals.2010.172.1949},
}

@article {strombergsson2005small,
    AUTHOR = {Str\"{o}mbergsson, Andreas and Venkatesh, Akshay},
     TITLE = {Small solutions to linear congruences and {H}ecke
              equidistribution},
   JOURNAL = {Acta Arith.},
  FJOURNAL = {Acta Arithmetica},
    VOLUME = {118},
      YEAR = {2005},
    NUMBER = {1},
     PAGES = {41--78},
      ISSN = {0065-1036},
   MRCLASS = {11P21 (11F72)},
  MRNUMBER = {2136220},
MRREVIEWER = {A. Raghuram},
       DOI = {10.4064/aa118-1-4},
       URL = {https://doi.org/10.4064/aa118-1-4},
}

@book {ErgWithNt,
    AUTHOR = {Einsiedler, Manfred and Ward, Thomas},
     TITLE = {Ergodic theory with a view towards number theory},
    SERIES = {Graduate Texts in Mathematics},
    VOLUME = {259},
 PUBLISHER = {Springer-Verlag London, Ltd., London},
      YEAR = {2011},
     PAGES = {xviii+481},
      ISBN = {978-0-85729-020-5},
   MRCLASS = {37A45 (05D10 11J70 11K50 28Dxx 37-01 37D40)},
  MRNUMBER = {2723325},
MRREVIEWER = {Vitaly Bergelson},
       DOI = {10.1007/978-0-85729-021-2},
       URL = {https://doi.org/10.1007/978-0-85729-021-2},
}

@incollection {marklof2007distribution,
    AUTHOR = {Marklof, Jens},
     TITLE = {Distribution modulo one and {R}atner's theorem},
 BOOKTITLE = {Equidistribution in number theory, an introduction},
    SERIES = {NATO Sci. Ser. II Math. Phys. Chem.},
    VOLUME = {237},
     PAGES = {217--244},
 PUBLISHER = {Springer, Dordrecht},
      YEAR = {2007},
   MRCLASS = {11K31 (11J71 11K06 28D15 37A45)},
  MRNUMBER = {2290501},
MRREVIEWER = {Thomas Ward},
       DOI = {10.1007/978-1-4020-5404-4\_11},
       URL = {https://doi.org/10.1007/978-1-4020-5404-4_11},
}

@article {kleinbock1999badly,
    AUTHOR = {Kleinbock, Dmitry},
     TITLE = {Badly approximable systems of affine forms},
   JOURNAL = {J. Number Theory},
  FJOURNAL = {Journal of Number Theory},
    VOLUME = {79},
      YEAR = {1999},
    NUMBER = {1},
     PAGES = {83--102},
      ISSN = {0022-314X},
   MRCLASS = {11J20 (22F30 37C85)},
  MRNUMBER = {1724255},
MRREVIEWER = {Henrietta Dickinson},
       DOI = {10.1006/jnth.1999.2419},
       URL = {https://doi.org/10.1006/jnth.1999.2419},
}

@article {elkies2004gaps,
    AUTHOR = {Elkies, Noam D. and McMullen, Curtis T.},
     TITLE = {Gaps in {${\sqrt n}\bmod 1$} and ergodic theory},
   JOURNAL = {Duke Math. J.},
  FJOURNAL = {Duke Mathematical Journal},
    VOLUME = {123},
      YEAR = {2004},
    NUMBER = {1},
     PAGES = {95--139},
      ISSN = {0012-7094},
   MRCLASS = {11J71 (22E40 37A17 37A25 37A45)},
  MRNUMBER = {2060024},
MRREVIEWER = {Ze\'{e}v Rudnick},
       DOI = {10.1215/S0012-7094-04-12314-0},
       URL = {https://doi.org/10.1215/S0012-7094-04-12314-0},
}

@article {furstenberg1967disjointness,
    AUTHOR = {Furstenberg, Harry},
     TITLE = {Disjointness in ergodic theory, minimal sets, and a problem in
              {D}iophantine approximation},
   JOURNAL = {Math. Systems Theory},
  FJOURNAL = {Mathematical Systems Theory. An International Journal on
              Mathematical Computing Theory},
    VOLUME = {1},
      YEAR = {1967},
     PAGES = {1--49},
      ISSN = {0025-5661},
   MRCLASS = {28.70 (10.00)},
  MRNUMBER = {213508},
MRREVIEWER = {W. Parry},
       DOI = {10.1007/BF01692494},
       URL = {https://doi.org/10.1007/BF01692494},
}

@article {Disjointness,
    AUTHOR = {de la Rue, Thierry},
     TITLE = {An introduction to joinings in ergodic theory},
   JOURNAL = {Discrete Contin. Dyn. Syst.},
  FJOURNAL = {Discrete and Continuous Dynamical Systems. Series A},
    VOLUME = {15},
      YEAR = {2006},
    NUMBER = {1},
     PAGES = {121--142},
      ISSN = {1078-0947},
   MRCLASS = {37A05 (28D05 37A25 37A35 60G10)},
  MRNUMBER = {2191388},
MRREVIEWER = {Mariusz Lema\'{n}czyk},
       DOI = {10.3934/dcds.2006.15.121},
       URL = {https://doi.org/10.3934/dcds.2006.15.121},
}

@article {siegel1945mean,
    AUTHOR = {Siegel, Carl Ludwig},
     TITLE = {A mean value theorem in geometry of numbers},
   JOURNAL = {Ann. of Math. (2)},
  FJOURNAL = {Annals of Mathematics. Second Series},
    VOLUME = {46},
      YEAR = {1945},
     PAGES = {340--347},
      ISSN = {0003-486X},
   MRCLASS = {10.0X},
  MRNUMBER = {12093},
MRREVIEWER = {D. Derry},
       DOI = {10.2307/1969027},
       URL = {https://doi.org/10.2307/1969027},
}

@article {strombergsson2015effective,
    AUTHOR = {Str\"{o}mbergsson, Andreas},
     TITLE = {An effective {R}atner equidistribution result for
              {$\text{SL}(2,\Bbb{R})\ltimes\Bbb{R}^2$}},
   JOURNAL = {Duke Math. J.},
  FJOURNAL = {Duke Mathematical Journal},
    VOLUME = {164},
      YEAR = {2015},
    NUMBER = {5},
     PAGES = {843--902},
      ISSN = {0012-7094},
   MRCLASS = {37A17 (11K60 22E40 22F30 37A45)},
  MRNUMBER = {3332893},
MRREVIEWER = {Anish Ghosh},
       DOI = {10.1215/00127094-2885873},
       URL = {https://doi.org/10.1215/00127094-2885873},
}

\end{document}